\documentclass[opre,nonblindrev]{informs4_revised}

\OneAndAHalfSpacedXI

\usepackage{graphicx} % figure
\usepackage{bm} % math bold font
\usepackage{amstext} % text in math environment
\usepackage{subfig} % subfigure
\usepackage{algorithmic} % environment of algorithm
\usepackage{enumerate} % environment of enumerate
\usepackage{calc} % calculate the width of text
\usepackage{tabularx} % table
\usepackage{makecell} % table cell
\usepackage{multirow} % table multirow
\usepackage{xcolor} % revision display
\newif\ifshowrevisions
\showrevisionstrue

\usepackage[colorlinks=false, linkcolor=blue, filecolor=magenta]{hyperref} % hyperlink
\usepackage[ruled,linesnumbered]{algorithm2e} % algorithm

\usepackage{natbib}
 \bibpunct[, ]{(}{)}{,}{a}{}{,}%
 \def\bibfont{\small}%
\TheoremsNumberedThrough     % Preferred (Theorem 1, Lemma 1, Theorem 2)
\ECRepeatTheorems
\JOURNAL{Operations Research}

\EquationsNumberedThrough    % Default: (1), (2), ...
\MANUSCRIPTNO{}

\begin{document}
% Outcomment only when entries are known. Otherwise leave as is and
%   default values will be used.
%\setcounter{page}{1}
%\VOLUME{00}%
%\NO{0}%
%\MONTH{Xxxxx}% (month or a similar seasonal id)
%\YEAR{0000}% e.g., 2005
%\FIRSTPAGE{000}%
%\LASTPAGE{000}%
%\SHORTYEAR{00}% shortened year (two-digit)
%\ISSUE{0000} %
%\LONGFIRSTPAGE{0001} %
%\DOI{10.1287/xxxx.0000.0000}%

% Author's names for the running heads
% Sample depending on the number of authors;
% \RUNAUTHOR{Jones}
% \RUNAUTHOR{Jones and Wilson}
% \RUNAUTHOR{Jones, Miller, and Wilson}
% \RUNAUTHOR{Jones et al.} % for four or more authors
% Enter authors following the given pattern:
%\RUNAUTHOR{}

% Title or shortened title suitable for running heads. Sample:
% \RUNTITLE{Bundling Information Goods of Decreasing Value}
% Enter the (shortened) title:
% \RUNTITLE{Test}

\RUNTITLE{Connecting Extreme-Point Generation and Decision Rules in Two-Stage Distributionally Robust Optimization}

% ----------- Original --------------
% \TITLE{Objective Certification and Value Completion via Dual Extreme Points in Two-Stage Distributionally Robust Optimization}
% ----------- Revised ---------------
\TITLE{Connecting Extreme-Point Generation and Decision Rules in Two-Stage Distributionally Robust Optimization}

% % Block of authors and their affiliations starts here:
% % NOTE: Authors with same affiliation, if the order of authors allows,
% %   should be entered in ONE field, separated by a comma.
% %   \EMAIL field can be repeated if more than one author
% \ARTICLEAUTHORS{%
% \AUTHOR{First Author}
% %,\textsuperscript{a} Second Author,\textsuperscript{b} Third Author,\textsuperscript{c} Fourth Author,\textsuperscript{c}
\ARTICLEAUTHORS{%
\AUTHOR{Jin Qi\textsuperscript{1}, Chen Yang\textsuperscript{2}}
\AFF{\textsuperscript{1}Department of Industrial Engineering and Decision Analytics, Hong Kong University of Science and Technology, \EMAIL{jinqi@ust.hk}}
\AFF{\textsuperscript{2} School of Management, Zhejiang University, \EMAIL{chenyang12@zju.edu.cn}}
}
\ABSTRACT{Two-stage distributionally robust optimization chooses a here-and-now decision and a wait-and-see decision policy that adapts to uncertainty realizations. Decision-rule methods specify the form of this adaptive policy in advance, whereas decomposition-generation methods construct second-stage value information iteratively. We connect these approaches through extreme points of the second-stage dual problem. Each extreme point defines an affine value piece, and a compatible primal basis can define an affine policy piece. Solving the first-stage problem may require only a subset of these pieces. We specialize an extreme-point generation method to solve the first-stage problem. A separate linear-programming procedure adds pieces until it recovers the recourse value over the uncertainty set, and the pieces can also be used to recover the optimal recourse policy. Building on the algorithm output, we develop a posteriori exactness test for conventional decision rules. We give extensions for degeneracy, rank-deficient recourse, and structured random recourse. The proposed algorithm is computationally efficient on the reported instances, and the results show that completing the second-stage value across the uncertainty set requires more extreme points than solving the first-stage problem alone.}

\KEYWORDS{distributionally robust optimization; two-stage optimization; extreme-point generation}
% \AREAOFREVIEW{Optimization}

\maketitle

\section{Introduction}
Two-stage optimization separates a here-and-now decision from wait-and-see decisions chosen after uncertainty is observed. Applications include production and supply-chain allocation, transportation and network design, and energy investment and operations \citep{birge2011introduction,ruszczynski2003stochastic,georghiou2020primaldual}. Classical stochastic programming evaluates decisions under a specified distribution, whereas distributionally robust optimization (DRO) protects against a family of plausible distributions \citep{kuhn2024distributionallyrobustoptimization}. Stochastic programming can be regarded as a special case of DRO with a singleton distribution set. Both frameworks optimize the first-stage decision with an adaptive second-stage policy.

Unrestricted adaptive decisions are generally difficult to compute \citep{shapiro2005complexity,dyer2006computational}. Prescribed affine, lifted, finite-adaptability, and piecewise-affine rules yield finite-dimensional models by selecting a policy architecture in advance \citep{ben2004adjustable,bertsimas2019adaptive,bental2020tractable}. Their output includes an explicit mapping from uncertainty to recourse decisions, but exactness depends on whether the chosen architecture contains an optimal policy.

Decomposition and generation methods organize computation around a different object. They construct the recourse value from generated scenarios or dual information and, under their stated conditions, can recover an exact first-stage solution and objective \citep{vanslyke1969lshaped,zeng2013columnconstraint, wang2022secondorder}. These methods primarily target an optimal first-stage decision and objective value. They do not automatically yield both an explicit second-stage policy and the recourse value function.

% ----------- Original --------------
% The two approaches therefore make different information explicit. A prescribed rule supplies an implementable policy together with the objective attained within its class. A method based on decomposition and generation can recover an exact objective and first-stage decision but does not explicitly provide the general recourse value function or policy. Although both approaches are popular, their theoretical connection and practical implications are not well understood.
% ----------- Revised ---------------
The two approaches therefore make different information explicit. A prescribed rule supplies an implementable policy together with the objective attained within its class. A method based on decomposition and generation can recover an exact objective and first-stage decision but does not explicitly provide the general recourse value function or policy. Although both approaches are popular, there are gaps in their theoretical connection and practical implications.

We study how the dual extreme-point representation of the second-stage problem connects these two approaches. The analysis yields an iterative solution method and characterizes its relationship with decision-rule methods. An optimal objective and first-stage decision do not by themselves recover the recourse value and second-stage policy for every uncertainty realization. Recovering the value function and policy reveals the structure of the second-stage problem and provides a target against which prescribed decision-rule architectures can be assessed. An explicit recourse value and policy can also eliminate repeated online solution of the second-stage problem, which is useful when recourse must be computed frequently or involves a large model, as in power systems \citep{jiang2016explicit}.

Our main contributions can be summarized as follows.
\begin{itemize}
    \item We use the dual extreme-point representation to connect decomposition and generation methods with decision rules. Each dual extreme point defines an affine value piece, and a compatible primal basis supplies an affine policy piece.
    
    \item We prove that the optimal objective can be preserved by at most the affine dimension of the master domain. We then develop iterative algorithms that selectively generate objective-relevant pieces. Under exact separation, the objective-oriented algorithm returns the exact objective and an optimal first-stage decision in finitely many iterations.
    
    \item At a fixed first-stage decision, we develop a procedure that adds the missing pieces until it recovers the recourse value and an optimal second-stage policy for every uncertainty realization. The procedure also provides an a posteriori exactness test for piecewise-affine decision rules and affine decision rules with lifted uncertainty.
    
    \item Numerical experiments evaluate objective-oriented generation against existing decomposition and generation methods and separately study the additional pieces needed to recover the recourse value at reference first-stage decisions. The results support both computational performance and the gap between the pieces needed to solve the first-stage problem and those needed to reproduce the recourse value pointwise.
\end{itemize}

\paragraph{Notation.}
For a positive integer $n$, let $[n]:=\{1,\ldots,n\}$. Scalars are written in ordinary type $a$, vectors in bold lowercase type $\BFa$, and matrices in bold uppercase type $BFA$. We write $\mathbb{R}_+^n$ for the nonnegative orthant. For a finite set $\mathcal A$, $|\mathcal A|$ denotes its cardinality, and $\mathcal A^c = \mathcal B \setminus \mathcal A$ denotes its complement relative to the set $\mathcal B$. For an index set $\mathcal J$, $\bm{v}_{\mathcal J}$ denotes the corresponding subvector and $\bm{C}_{\mathcal J}$ the row submatrix indexed by $\mathcal J$. We use $\operatorname{supp}(\bm{v})$, $\operatorname{rank}(\bm{C})$, $\operatorname{range}(\bm{C})$, $\operatorname{ker}(\bm{C})$, $\operatorname{affdim}(\mathcal C)$, and $\operatorname{rec}(\mathcal Z)$ for support, rank, range, kernel, affine dimension, and recession cone, respectively. The uncertainty dimension is denoted by $N_\xi$, and $\mathcal M(\mathbb R^{N_\xi})$ denotes the set of probability measures on $\mathbb R^{N_\xi}$.

\section{Related Literature}
In this section, we review the literature on decision rules, multiparametric linear programming, and iterative methods. We then discuss how our work relates to these perspectives and summarize the connections in Table~\ref{tab:literature_positioning}.

\paragraph{Prescribed decision rules and exactness.}
Affine rules and richer decision rules are classical methods in adjustable robust optimization, stochastic programming, and DRO \citep{ben2004adjustable,kuhn2011primal,bertsimas2019adaptive,chen2008linear}. These methods directly parameterize an explicit policy. Liftings and piecewise-affine constructions enrich the mapping through additional coordinates or adding pieces \citep{georghiou2015generalized,bental2020tractable, thoma2026note}, while adaptive partitions vary a rule across selected regions \citep{bertsimas2015design}. Finite adaptability and $K$-adaptability instead use a finite collection of decisions or policies and choose among them after uncertainty is observed \citep{bertsimas2010finite,hanasusanto2015k,han2022finite}. Approximation bounds and structural exactness results identify when decision rules are effective and attain the optimum \citep{bertsimas2012power,bertsimas2010optimality,iancu2013supermodularity,georghiou2021optimality}. Unlike decision-rule methods, our framework does not prescribe a policy architecture; instead, it generates value pieces endogenously from the dual extreme-point representation. After recovering the optimal recourse value, we can test whether a prescribed architecture can reproduce a pointwise-optimal policy.

\paragraph{Exact second-stage structure and multiparametric linear programming.}
Classical multiparametric linear programming (LP) and stochastic-programming results establish that right-hand-side-parametric linear recourse has a piecewise-affine value function and admits piecewise-affine optimizer maps described by optimal bases and critical regions \citep{garstka1974decision,gal1972multiparametric,borrelli2003geometric}. They distinguish the value function from the optimizer correspondence and address degeneracy, under which regions may overlap and several bases or optimizers may be valid; lexicographic perturbation provides one refinement mechanism \citep{borrelli2003geometric,jones2007lexicographic}. An optimal basis induces an affine optimizer on a polyhedral critical region, and finitely many such regions yield complete piecewise-affine value and optimizer maps. We use this geometry to convert an exactly covering set of dual value pieces into pointwise-optimal second-stage decisions.

\paragraph{Exact decomposition and iterative generation.}
Exact iterative methods use the restricted-master and separation logic of Benders decomposition, the L-shaped method, and exchange methods \citep{benders1962partitioning,vanslyke1969lshaped,blankenship1976infinitely}. Depending on the formulation, they add uncertainty realizations, cuts, or dual information.
The C\&CG method of \cite{zeng2013columnconstraint} adds uncertainty realizations with scenario-specific second-stage variables. \cite{georghiou2020primaldual} transfer uncertainty-set vertices to a scenario set and construct an explicit piecewise-affine rule through simplicial decompositions and interpolation. \cite{song2015adaptive} refine partitions by grouping scenarios around shared optimal dual solutions. For continuous-support Wasserstein and optimal-transport models, \cite{duque2022distributionally} maintain support points together with scenario-indexed sets of dual extreme points and complete the dual information at stored points before separating new support points. \cite{gamboa2021decomposition} compare C\&CG with single-cut and multi-cut Benders variants for Wasserstein data-driven models, showing that one algorithmic framework can retain scenario copies, cuts, or both. \cite{byeon2025twostage} combine cutting planes with unified scenario generation for two-stage distributionally robust conic linear programs.

% ----------- Original --------------
% Other methods use dual extreme points of the second-stage problem. \cite{wang2022secondorder} generate selected dual extreme points iteratively and establish finite convergence under boundedness and exact solution of the separation subproblem, while their practical subproblem uses approximate algorithms. \cite{diazcachinero2026dualclustering} cluster scenarios that share an optimal dual solution and add only the required CVaR hyperplanes, retaining the dual information for reuse in finite-scenario stochastic programs. Our work shares the dual extreme-point generation perspective, but we distinguish the pieces required for objective certification from those required for value completion and policy recovery. This distinction connects the decision-rule and iterative-generation perspectives and provides theoretical insight into the structure of two-stage DRO. Moreover, regarding the model setting, both \cite{wang2022secondorder} and \cite{diazcachinero2026dualclustering} focus on specialized DRO classes involving second-order conic programs and CVaR, while our work focuses on two-stage DRO with general polyhedral uncertainty sets and linear recourse.
% ----------- Revised ---------------
Other methods use dual extreme points of the second-stage problem. \cite{wang2022secondorder} generate selected dual extreme points iteratively and establish finite convergence under boundedness and exact solution of the separation subproblem, while their practical subproblem uses approximate algorithms. \cite{diazcachinero2026dualclustering} cluster scenarios that share an optimal dual solution and add only the required CVaR hyperplanes, retaining the dual information for reuse in finite-scenario stochastic programs. Our work shares the dual extreme-point generation perspective, but we distinguish the pieces needed to solve the first-stage problem from the additional pieces needed to reproduce the recourse value at every realization and, under certain basis conditions, recover an optimal policy. This distinction connects the decision-rule and iterative-generation perspectives and provides theoretical insight into the structure of two-stage DRO. Moreover, regarding the model setting, both \cite{wang2022secondorder} and \cite{diazcachinero2026dualclustering} focus on specialized DRO classes involving second-order conic programs and CVaR, while our work focuses on two-stage DRO with general polyhedral uncertainty sets and linear recourse.

\begin{table}[htbp]
\centering
\caption{Position of this paper across the main literature streams}
\label{tab:literature_positioning}

\footnotesize
\renewcommand{\arraystretch}{1.08}
\begin{tabularx}{\dimexpr\linewidth-3pt\relax}{@{}>{\raggedright\arraybackslash}X >{\raggedright\arraybackslash}X >{\raggedright\arraybackslash}X >{\raggedright\arraybackslash}X@{}}
\hline
Literature stream & Representation & Main result or output & Relation to this paper \\
\hline

Prescribed decision rules and exactness (e.g., \cite{georghiou2021optimality})
& Preselected policy architectures
& Tractable or exact rules under specified structures
& We recover an optimal policy from dual extreme points and provide an optimality check for decision rules
 \\
\hline
Parametric LP (e.g., \cite{jones2007lexicographic})
& Value functions, bases, and critical regions
& Complete value and optimizer maps
& We use multiparametric LP geometry to derive our theoretical implication \\
\hline
Decomposition and generation methods (e.g., \cite{duque2022distributionally,wang2022secondorder})
& Cuts, support points, scenarios, or dual second-stage information
& Exact solutions under method-specific assumptions
& We share selective generation of dual information \\
\hline
This paper
& Dual extreme-point representation and compatible primal bases
& \multicolumn{2}{
    >{\raggedright\arraybackslash}
    p{\dimexpr.5\linewidth-1.5pt-\tabcolsep\relax}@{}
% ----------- Original --------------
%   }{Connection between decision-rule and iterative-generation perspectives.
%     Algorithms and implications for objective certification and value completion.}
% ----------- Revised ---------------
  }{Connection between decision-rule and iterative generation.
    Algorithms and implications for solving the first-stage problem and recovering recourse value information pointwise.}
\\
\hline
\end{tabularx}
\end{table}

\section{Model Setup}

We first consider a fixed-recourse two-stage DRO problem in which uncertainty affects only the right-hand side of the second-stage constraints. Let $\BFx_1\in\mathbb{R}^{N_1}$ and $\BFx_2\in\mathbb{R}^{N_2}$ denote the first- and second-stage decisions, respectively. The uncertain vector $\BFxi$ is observed between the two stages, with support $\Xi \subseteq \mathbb{R}^{N_\xi}$, and $\mathcal{P}$ denotes the ambiguity set. The problem is
\begin{equation}
    \label{eq:first_stage_problem_continuous}
    \begin{aligned}
        \min_{\BFx_1}\quad
        & \BFc^\top\BFx_1+\sup_{\mathbb{P}\in\mathcal{P}}\mathbb{E}_{\mathbb{P}}[\psi(\BFx_1,\BFxi)]\\
        \text{s.t.}\quad & \BFA\BFx_1\geq\BFa,
    \end{aligned}
\end{equation}
where the second-stage problem, also called the recourse value function, is
\begin{equation}
    \label{eq:second_stage_problem_continuous}
    \begin{aligned}
        \psi(\BFx_1,\BFxi)=\min_{\BFx_2} \left\{ \BFb^\top\BFx_2 \mid \BFB\BFx_1+\BFC\BFx_2\geq\BFW\BFxi+\BFw \right\}.
    \end{aligned}
\end{equation}
Suppose $\BFC \in \mathbb{R}^{M\times N_2}$ and $M \geq N_2$, and let $L$ denote the number of ambiguity-dual constraint families. For many ambiguity sets, problem \eqref{eq:first_stage_problem_continuous} admits the following ambiguity-dual epigraph representation.
\begin{equation}
    \label{eq:two_stage_reformulation_general}
    \begin{aligned}
        \min_{\BFx_1,\BFtheta}\quad &\BFc^\top\BFx_1+\phi_0(\BFtheta)\\
        \text{s.t.}\quad
        &\phi_\ell(\BFtheta,\BFxi)\geq\psi(\BFx_1,\BFxi),
        &&\forall\BFxi\in\Xi,\ \ell\in[L],\\
        &\BFA\BFx_1\geq\BFa,\quad \BFtheta\in\Theta.
    \end{aligned}
\end{equation}
The ambiguity set $\mathcal{P}$ determines $\BFtheta$ and the functions $\phi_0,\phi_\ell$. The exact epigraph and separation requirements depend on the ambiguity model. We assume that the DRO problem has a finite optimal value and attains an optimizer, and that the uncertainty set $\Xi$ is compact.

\paragraph{Mean-absolute deviation ambiguity set.}
The MAD ambiguity set \cite{goh2010distributionally} is given by
\begin{equation}
    \label{eq:mad_ambiguity_set}
    \mathcal{P}=\left\{\mathbb{P}\in\mathcal{M}(\mathbb{R}^{N_\xi})\ \middle|\
        \mathbb{P}(\BFxi\in\Xi)=1,
        \mathbb{E}_{\mathbb{P}}(\BFxi)=\BFmu,
        \mathbb{E}_{\mathbb{P}}(|\BFxi-\BFmu|)\leq\BFsigma
    \right\}.
\end{equation}
The corresponding epigraph reformulation is
\begin{equation}
    \label{eq:two_stage_reformulation_mad}
    \begin{aligned}
        \min_{\BFx_1,\theta_0,\BFtheta_1,\BFtheta_2}\quad
        &\BFc^\top\BFx_1+\theta_0+\BFmu^\top\BFtheta_1+\BFsigma^\top\BFtheta_2\\
        \text{s.t.}\quad
        &\theta_0+\BFtheta_1^\top\BFxi+\BFtheta_2^\top|\BFxi-\BFmu|
        \geq\psi(\BFx_1,\BFxi), &&\forall\BFxi\in\Xi,\\
        &\BFA\BFx_1\geq\BFa,\quad \BFtheta_2\geq0.
    \end{aligned}
\end{equation}

\paragraph{Wasserstein ambiguity set.}
Let $\hat{\mathbb{P}}$ denote the empirical distribution on the samples $\{\hat{\BFxi}_i\}_{i\in[L]}$. The 1-Wasserstein ambiguity set is
\begin{equation}
    \label{eq:wasserstein_ambiguity_set}
    \mathcal{P}=\left\{\mathbb{P}\in\mathcal{M}(\mathbb{R}^{N_\xi})\ \middle|\
    \mathbb{P}(\BFxi\in\Xi)=1,\ W_1(\mathbb{P},\hat{\mathbb{P}})\leq\epsilon\right\}, \ \text{where}\ W_1(\mathbb{P},\hat{\mathbb{P}})
    :=\inf_{\pi\in\Pi(\mathbb{P},\hat{\mathbb{P}})}
    \int_{\Xi\times\Xi}\|\BFxi-\BFxi'\|\,d\pi(\BFxi,\BFxi').
\end{equation}
Here $\Pi(\mathbb{P},\hat{\mathbb{P}})$ denotes the set of couplings with marginals $\mathbb{P}$ and $\hat{\mathbb{P}}$, and $\|\cdot\|$ denotes the ground norm. The corresponding epigraph form is
\begin{equation}
    \label{eq:two_stage_reformulation_wasserstein}
    \begin{aligned}
        \min_{\BFx_1,\theta_0,\ldots,\theta_L}\quad
        &\BFc^\top\BFx_1+\theta_0\epsilon+\frac{1}{L}\sum_{i\in[L]}\theta_i\\
        \text{s.t.}\quad
        &\theta_i+\theta_0\|\hat{\BFxi}_i-\BFxi\|\geq\psi(\BFx_1,\BFxi),
        &&\forall\BFxi\in\Xi,\ i\in[L],\\
        &\BFA\BFx_1\geq\BFa,\quad \theta_0\geq0.
    \end{aligned}
\end{equation}

\section{Fixed-Recourse Problem and Its Policy Geometry}
The continuous two-stage model is given in \eqref{eq:first_stage_problem_continuous}--\eqref{eq:second_stage_problem_continuous}. We now analyze the fixed-recourse structure underlying the main results. Define $\BFd(\BFxi, \BFx_1) = \BFW\BFxi + \BFw - \BFB\BFx_1$. The dual of the second-stage linear program is
\begin{equation}
    \label{eq:second_stage_problem_dual_continuous}
    \max_{\BFz \geq 0}\left\{\BFd(\BFxi, \BFx_1)^\top \BFz \mid \BFC^\top \BFz = \BFb\right\}.
\end{equation}

The fixed-recourse analysis relies on the fact that the recourse matrix $\BFC$ and the second-stage cost vector $\BFb$ are fixed. Consequently, the dual feasible polyhedron does not depend on $\BFx_1$ or $\BFxi$. We denote it by
\[
    \mathcal{Z}
    :=
    \{\BFz\mid \BFC^\top\BFz=\BFb,\ \BFz\geq\BFzero\}
\]
We assume that $\mathcal{Z}$ is nonempty.
As a polyhedron, $\mathcal{Z}$ can be represented by its extreme points and rays. Let $\mathcal{S}=\{\BFs_i\}_{i\in[S]}$ be the extreme-point set of $\mathcal{Z}$, where $S=|\mathcal{S}|$, and let $\mathcal{R}=\{\BFr_k\}_{k\in[R]}$ be the extreme-ray set of the recession cone \(
    \operatorname{rec}(\mathcal{Z})
    =
    \{\BFr\mid \BFC^\top\BFr=\BFzero,\ \BFr\geq\BFzero\}
\). The set $\mathcal{Z}$ then admits the point-and-ray representation
$$
    \mathcal{Z} = \left\{
        \BFz | \BFC^\top \BFz = \BFb, \BFz \geq 0
    \right\} = \left\{
        \sum_{i\in [S]} \lambda_i \BFs_i + \sum_{k \in [R]} \eta_k \BFr_k | \lambda_i \geq 0, \eta_k \geq 0, \sum_{i\in [S]} \lambda_i = 1
    \right\}.
$$

The point-and-ray representation allows us to express the dual problem \eqref{eq:second_stage_problem_dual_continuous} without explicit constraints. To ensure second-stage feasibility, Farkas' lemma requires the extreme-ray conditions $\BFd(\BFxi, \BFx_1)^\top \BFr_k \leq 0$ for all $\BFr_k\in\mathcal{R}$. We therefore define the extensive feasible region of $\BFx_1$ as
\[
    \mathcal{X}_1:=\left\{\BFx_1:\BFA\BFx_1\geq\BFa,\ 
    \sup_{\BFxi\in\Xi}\BFd(\BFxi,\BFx_1)^\top\BFr\leq0,
    \ \forall\BFr\in\mathcal{R}\right\}.
\]
The supremum imposes each ray inequality for every realization in $\Xi$. Thus, $\mathcal{X}_1$ retains the original first-stage constraints and excludes any first-stage decision for which an extreme ray detects an infeasible second-stage problem. Under relatively complete recourse, the inequalities $\sup_{\BFxi\in\Xi}\BFd(\BFxi,\BFx_1)^\top\BFr\leq0$ hold automatically.

For $\BFx_1\in\mathcal{X}_1$, finite LP duality yields the following pointwise-maximum representation of the second-stage value function.
\begin{equation}
    \label{eq:extreme_point_representation_continuous}
    \psi(\BFx_1, \BFxi) = \max_{\BFz \geq 0}\left\{\BFd(\BFxi, \BFx_1)^\top \BFz \mid \BFC^\top \BFz = \BFb\right\} = \max_{\BFs \in \mathcal{S}}\ \BFd(\BFxi, \BFx_1)^\top \BFs.
\end{equation}
% ----------- Original --------------
% Substituting \eqref{eq:extreme_point_representation_continuous} into \eqref{eq:two_stage_reformulation_general} yields the exact reformulation with dual extreme points
% ----------- Revised ---------------
Substituting \eqref{eq:extreme_point_representation_continuous} into \eqref{eq:two_stage_reformulation_general} yields the following exact reformulation in terms of dual extreme points.
\begin{equation}
    \label{eq:two_stage_reformulation_extreme_points_continuous}
    \begin{aligned}
        \min_{\BFx_1, \BFtheta}\ & \BFc^\top \BFx_1 + \phi_0(\BFtheta)
         \\
        \text{s.t.}\ &\phi_\ell(\BFtheta,\BFxi) \geq \BFd(\BFxi, \BFx_1)^\top \BFs, \qquad \forall \BFxi \in \Xi,\ \BFs \in \mathcal{S},\ \ell \in [L], \\
        & \BFx_1 \in \mathcal{X}_1, \BFtheta \in \Theta.
    \end{aligned}
\end{equation}
Because the dual feasible region $\mathcal{Z}$ is fixed, its extreme-point set $\mathcal{S}$ is also fixed. Thus, once $\mathcal{S}$ is known, problem \eqref{eq:two_stage_reformulation_extreme_points_continuous} is a conventional robust optimization problem. The formulation also reveals a natural piecewise-affine structure in the second-stage problem.

\subsection{From Value Pieces to Policy Pieces} 
The representation \eqref{eq:extreme_point_representation_continuous} shows that each extreme point $\BFs_i$ defines an affine value piece $\BFd(\BFxi,\BFx_1)^\top\BFs_i$. We use complementary slackness to recover a compatible affine policy piece from each value piece. We first consider full-column-rank recourse and nondegenerate dual extreme points. Section~\ref{sec:degenerate_policy_recovery} treats dual degeneracy and rank-deficient recourse. We impose the following assumption.
\begin{assumption}[Nondegenerate policy recovery]
    \label{asmp:nondegenerate_policy_recovery}
    The fixed recourse matrix and all dual extreme points satisfy
    \[
        \operatorname{rank}(\BFC)=N_2,
        \qquad
        |\operatorname{supp}(\BFs_i)|=N_2,
        \quad \forall \BFs_i\in\mathcal{S}.
    \]
\end{assumption}
Assumption~\ref{asmp:nondegenerate_policy_recovery} is needed for policy recovery but not for the value-function reformulation \eqref{eq:two_stage_reformulation_extreme_points_continuous}. 
For any nonempty indexed set $\mathcal T\subseteq\mathcal S$, define the deterministic selector
\(
i_{\BFx_1}^{\mathcal T}(\BFxi)
:=\min\arg\max_{j:\,\BFs_j\in\mathcal T}
\BFd(\BFxi,\BFx_1)^\top\BFs_j.
\)
Then, we can obtain the optimal decision rule as follows.
\begin{proposition}
    \label{prop:optimal_decision_rule_continuous}
    Suppose Assumption~\ref{asmp:nondegenerate_policy_recovery} holds. Let $\BFx_1^*$ be an optimal first-stage decision. 
    % Define
    % \(
    %     \Xi_i(\BFx_1)
    %     :=
    %     \left\{
    %     \BFxi\in\Xi:
    %     \BFd(\BFxi,\BFx_1)^\top\BFs_i
    %     \geq
    %     \BFd(\BFxi,\BFx_1)^\top\BFs_j,
    %     \forall j\in[S]
    %     \right\}
    % \), and 
    Let $\mathcal{J}_i=\operatorname{supp}(\BFs_i)$ for each $\BFs_i\in\mathcal{S}$.
    The optimal second-stage decision rule is
    \begin{equation}
        \label{eq:optimal_decision_rule_continuous}
        \BFx_2(\BFxi)
        =
        \BFC_{\mathcal{J}_i}^{-1}
        \BFd_{\mathcal{J}_i}(\BFxi,\BFx_1^*),
        \qquad i=i_{\BFx_1^*}^{\mathcal S}(\BFxi).
    \end{equation}
\end{proposition}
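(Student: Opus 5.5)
The approach is to certify, for each fixed $\BFxi\in\Xi$, that the vector in \eqref{eq:optimal_decision_rule_continuous} is primal feasible for \eqref{eq:second_stage_problem_continuous} and attains the value $\psi(\BFx_1^*,\BFxi)$. By \eqref{eq:extreme_point_representation_continuous}, that value is $\BFd(\BFxi,\BFx_1^*)^\top\BFs_i$ with $i=i^{\mathcal S}_{\BFx_1^*}(\BFxi)$. The argument is LP basis theory together with complementary slackness. The rule is then optimal pointwise. Because $\BFx_1^*$ is optimal for the first stage and the DRO objective depends on the recourse only through $\psi$, the pair $(\BFx_1^*,\BFx_2(\cdot))$ is optimal for \eqref{eq:first_stage_problem_continuous}.

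\textbf{Step 1: the inverse exists.} Under Assumption~\ref{asmp:nondegenerate_policy_recovery}, $\BFs_i$ is an extreme point of $\mathcal Z=\{\BFz\ge 0:\BFC^\top\BFz=\BFb\}$. The columns of $\BFC^\top$ indexed by $\operatorname{supp}(\BFs_i)$ are therefore linearly independent, so the rows $\BFC_{\mathcal J_i}$ are linearly independent. Since $|\mathcal J_i|=N_2$, the matrix $\BFC_{\mathcal J_i}$ is square of size $N_2$ and invertible. This makes $\BFx_2(\BFxi)$ well defined and ensures $\BFC_{\mathcal J_i}\BFx_2(\BFxi)=\BFd_{\mathcal J_i}(\BFxi,\BFx_1^*)$, so the constraints in $\mathcal J_i$ are tight.

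\textbf{Step 2: objective value.} Using $\BFC^\top\BFs_i=\BFb$ and $(\BFs_i)_{\mathcal J_i^c}=0$, I would compute
\[
\BFb^\top\BFx_2(\BFxi)=\BFs_{i,\mathcal J_i}^\top\BFC_{\mathcal J_i}\BFx_2(\BFxi)=\BFs_{i,\mathcal J_i}^\top\BFd_{\mathcal J_i}=\BFd(\BFxi,\BFx_1^*)^\top\BFs_i=\psi(\BFx_1^*,\BFxi).
\]
Once feasibility is established, weak duality then gives optimality.

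\textbf{Step 3: feasibility of the remaining rows $\mathcal J_i^c$.} I expect this to be the main obstacle. An arbitrary maximizing extreme point does not automatically give a feasible basic solution, because the constraints outside $\mathcal J_i$ could be violated. The plan is to use nondegeneracy. Since $\BFx_1^*\in\mathcal X_1$ and $\psi$ is finite, the primal problem at $\BFxi$ is feasible and bounded, so it has an optimal vertex $\bar\BFx_2$ (rank $\BFC=N_2$ guarantees vertices exist). The corresponding optimal basis $\mathcal B$, with $|\mathcal B|=N_2$ rows, yields a dual basic solution $\bar\BFz$ with $\bar\BFz_{\mathcal B}=\BFC_{\mathcal B}^{-\top}\BFb$ and zeros elsewhere. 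Optimality of the basis gives $\bar\BFz_{\mathcal B}\ge 0$, so $\bar\BFz$ is a dual extreme point; by nondegeneracy its support is exactly $\mathcal B$.

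The difficulty is that $\bar\BFz$ may differ from $\BFs_i$ when several extreme points tie in the maximum. To handle ties, I would apply complementary slackness between $\bar\BFx_2$ (primal optimal) and $\BFs_i$ (dual optimal, since it attains $\psi$). This gives $\BFC_{j}\bar\BFx_2=d_j$ for every $j\in\mathcal J_i$. Because $\BFC_{\mathcal J_i}$ is invertible, $\bar\BFx_2=\BFC_{\mathcal J_i}^{-1}\BFd_{\mathcal J_i}=\BFx_2(\BFxi)$. The rule therefore coincides with a feasible optimal primal point. In effect, every optimal dual extreme point is compatible with every optimal primal solution, and invertibility pins the primal down uniquely.

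Finally, I would note that the $\min\arg\max$ selector only fixes a single index, so that the rule is a well-defined function of $\BFxi$. By the argument above, any maximizing index yields the same point, so the choice of selector does not affect optimality.
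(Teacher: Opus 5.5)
Your proof is correct and follows essentially the paper's route. The rows of $\BFC$ indexed by $\mathcal{J}_i=\operatorname{supp}(\BFs_i)$ are linearly independent and hence invertible. Complementary slackness between the dual-optimal $\BFs_i$ and a primal optimum forces those rows to bind, so invertibility identifies the rule with that primal optimal point. You make the existence of this primal optimum explicit, via $\BFx_1^*\in\mathcal{X}_1$ and $\mathcal{Z}\neq\emptyset$, which the paper's phrase ``dual optimality implies primal feasibility'' leaves implicit. Your detour through the optimal basis $\mathcal{B}$ and $\bar\BFz$ is unnecessary: the final argument uses only the existence of some primal optimum $\bar\BFx_2$.
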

\proof{Proof.}
For each $\BFs_i\in\mathcal{S}$, $\mathcal{J}_i=\operatorname{supp}(\BFs_i)$ denotes the index set of its nonzero components. Under Assumption~\ref{asmp:nondegenerate_policy_recovery}, the $N_2$ rows in $\BFC_{\mathcal{J}_i}$ are linearly independent and therefore form a unique nonsingular basis. If $\BFs_i$ is dual optimal at $(\BFxi,\BFx_1^*)$, complementary slackness requires every row in $\mathcal{J}_i$ to be binding. Therefore,
\[
    \BFC_{\mathcal{J}_i}\BFx_2
    =
    \BFd_{\mathcal{J}_i}(\BFxi,\BFx_1^*)
    \Rightarrow
    \BFx_2^i(\BFxi)
    =
    \BFC_{\mathcal{J}_i}^{-1}
    \BFd_{\mathcal{J}_i}(\BFxi,\BFx_1^*).
\]
This policy is valid when $\BFs_i$ is dual optimal, and dual optimality implies primal feasibility. The selector chooses one such point at every $\BFxi\in\Xi$, so \eqref{eq:optimal_decision_rule_continuous} is feasible and optimal for the recourse problem \eqref{eq:second_stage_problem_continuous}.
\hfill \Halmos

Thus, Proposition~\ref{prop:optimal_decision_rule_continuous} shows that the dual extreme-point representation both provides an exact reformulation of the two-stage DRO problem and induces a natural piecewise-affine policy structure for the second-stage problem.

\subsection{Finite Extreme-Point Support for the Optimal Objective}
We have established the exact reformulation \eqref{eq:two_stage_reformulation_extreme_points_continuous} and, through Proposition~\ref{prop:optimal_decision_rule_continuous}, piecewise-affine policy recovery from the complete extreme-point set. A potential obstacle is that the number of dual extreme points $S$ can be very large, making both the reformulation and policy recovery impractical. We now show that only finitely many dual extreme points are needed to preserve the optimal objective value.

Classical support-reduction arguments for finite-dimensional semi-infinite convex programs follow from Helly's theorem \citep{shapiro2009semiinfinite,basu2017optimality}. We group the $L$ robust blocks of each dual vertex so that the resulting DRO-specific count applies to vertices.

To state the objective-support result, define
\(
    \mathcal{C}:=\mathcal{X}_1\times\Theta
\)
as the variable domain before the robust blocks are imposed. For each $(\BFs)\in\mathcal{S}$, define the feasible set induced by the corresponding robust block as
\[
    \mathcal{D}_{\BFs}
    :=
    \bigcap_{\ell=1}^L\mathcal{C}_{\BFs,\ell}, \text{ where }\mathcal{C}_{\BFs,\ell}
    :=
    \left\{
        (\BFx_1,\BFtheta)\in\mathcal{C}:
        \phi_\ell(\BFtheta,\BFxi)
        \geq
        \BFd(\BFxi,\BFx_1)^\top\BFs,
        \ \forall\BFxi\in\Xi
    \right\}.
\]
% For each $\BFs\in\mathcal{S}$, define the grouped vertex block
% \[
%     \mathcal{D}_{\BFs}
%     :=
%     \bigcap_{\ell=1}^L\mathcal{C}_{\BFs,\ell}.
% \]
The set $\mathcal{D}_{\BFs}$ imposes all robust constraints associated with vertex $\BFs$.
Let $p:=\operatorname{affdim}(\mathcal{C})$ denote the affine dimension of $\mathcal{C}$. When $\mathcal{C}$ is full-dimensional, $p=N_1+N_\theta$. For example, the MAD and Wasserstein reformulations \eqref{eq:two_stage_reformulation_mad} and \eqref{eq:two_stage_reformulation_wasserstein} have $N_\theta=1+2N_\xi$ and $N_\theta=L+1$. We impose the following assumption.
\begin{assumption}[Finite-dimensional convexity]
\label{asmp:support_reduction_regular}
The set $\mathcal{C}$ is nonempty and convex and has finite affine dimension $p$. The objective $\BFc^\top\BFx_1+\phi_0(\BFtheta)$ is convex on $\mathcal{C}$ and admits a finite optimal value. Each set $\mathcal{C}_{\BFs,\ell}$ is also convex.
\end{assumption}
The MAD and Wasserstein reformulations \eqref{eq:two_stage_reformulation_mad} and \eqref{eq:two_stage_reformulation_wasserstein} satisfy this assumption. The following theorem shows that at most $p$ dual extreme points can preserve the optimal objective value.

\begin{theorem}[Finite extreme-point support for the optimal objective]
\label{thm:size_extreme_point_set}
Suppose Assumption~\ref{asmp:support_reduction_regular} holds. There exists a subset $\mathcal{S}_{\mathrm{obj}}\subseteq\mathcal{S}$ with
\(
    |\mathcal{S}_{\mathrm{obj}}|\leq p,
\)
such that the master obtained by retaining
\[
    \phi_\ell(\BFtheta,\BFxi)
    \geq
    \BFd(\BFxi,\BFx_1)^\top\BFs,
    \qquad
    \forall \BFxi\in\Xi,\ \BFs\in\mathcal{S}_{\mathrm{obj}},\ \ell\in[L],
\]
has the same optimal objective value as the full master \eqref{eq:two_stage_reformulation_extreme_points_continuous}.
\end{theorem}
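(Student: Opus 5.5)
The argument is the Helly-type support reduction for convex programs, applied to the finite family of grouped vertex blocks $\{\mathcal{D}_{\BFs}\}_{\BFs\in\mathcal{S}}$. Let $v^*$ be the optimal value of the full master \eqref{eq:two_stage_reformulation_extreme_points_continuous}, which is finite by assumption, and write $f(\BFx_1,\BFtheta):=\BFc^\top\BFx_1+\phi_0(\BFtheta)$. Because $\mathcal{S}$ is finite, the full master is the minimization of $f$ over $\bigcap_{\BFs\in\mathcal{S}}\mathcal{D}_{\BFs}$. Each $\mathcal{D}_{\BFs}$ is a finite intersection of the convex sets $\mathcal{C}_{\BFs,\ell}$, so it is convex.

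For every $\epsilon>0$, define the sublevel set $\mathcal{L}_\epsilon:=\{(\BFx_1,\BFtheta)\in\mathcal{C}: f(\BFx_1,\BFtheta)\leq v^*-\epsilon\}$. This set is convex because $f$ is convex on $\mathcal{C}$. By definition of $v^*$, the finite family $\{\mathcal{L}_\epsilon\}\cup\{\mathcal{D}_{\BFs}\}_{\BFs\in\mathcal{S}}$ has empty intersection. All these sets lie in $\operatorname{aff}(\mathcal{C})$, which has dimension $p$. Helly's theorem in $\mathbb{R}^p$ for finitely many convex sets, with no closedness needed, then gives a subfamily of at most $p+1$ members with empty intersection.

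That subfamily must contain $\mathcal{L}_\epsilon$. Otherwise at most $p+1$ of the sets $\mathcal{D}_{\BFs}$ would have empty intersection, contradicting feasibility of the full master: an optimizer exists, and every point of $\bigcap_{\BFs}\mathcal{D}_{\BFs}$ lies in every $\mathcal{D}_{\BFs}$. So there is a set $\mathcal{S}_\epsilon$ with $|\mathcal{S}_\epsilon|\leq p$ such that $\mathcal{L}_\epsilon\cap\bigcap_{\BFs\in\mathcal{S}_\epsilon}\mathcal{D}_{\BFs}=\emptyset$. Hence the master restricted to $\mathcal{S}_\epsilon$ has value at least $v^*-\epsilon$. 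It also has value at most $v^*$, since it is a relaxation of the full master.

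Finally, remove the $\epsilon$. There are only finitely many subsets of $\mathcal{S}$ of size at most $p$, so some fixed subset $\mathcal{S}_{\mathrm{obj}}$ equals $\mathcal{S}_\epsilon$ along a sequence $\epsilon_k\downarrow0$. For that subset the relaxed value is at least $v^*-\epsilon_k$ for all $k$, and therefore equals $v^*$. This pigeonhole step is the main obstacle: it replaces any compactness or lower-semicontinuity argument, which Assumption~\ref{asmp:support_reduction_regular} does not provide. It works only because $\mathcal{S}$ is finite, so that point should be stated explicitly.

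A minor check is the degenerate case $\mathcal{L}_\epsilon=\emptyset$. Then the claim holds trivially with $\mathcal{S}_{\mathrm{obj}}=\emptyset$, since the objective is bounded below by $v^*$ on all of $\mathcal{C}$. One should also confirm that the Helly count is $p+1$ in the ambient affine hull, because this is what yields the bound $p$ rather than $N_1+N_\theta$.
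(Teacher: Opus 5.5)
Your proof is correct and is essentially the paper's argument: Helly's theorem in the $p$-dimensional affine hull of $\mathcal{C}$, applied to the family $\{\mathcal{L}\}\cup\{\mathcal{D}_{\BFs}\}_{\BFs\in\mathcal{S}}$, where the Helly subfamily must contain the objective set and so leaves at most $p$ vertex blocks. The only difference is that the paper uses the strict sublevel set $\mathcal{L}=\{(\BFx_1,\BFtheta)\in\mathcal{C}: f(\BFx_1,\BFtheta)<v^*\}$, which is already convex, so emptiness of $\mathcal{L}\cap\bigcap_{\BFs\in\mathcal{S}_{\mathrm{obj}}}\mathcal{D}_{\BFs}$ gives a restricted value of at least $v^*$ directly; your $\epsilon$-sublevel sets, the pigeonhole limit, and the separate degenerate-case check (whose wording slightly overstates what $\mathcal{L}_\epsilon=\emptyset$ implies) are therefore unnecessary, not the main obstacle.
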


\proof{Proof.}
Let $v^*$ be the finite optimal value of the full master and define the strict lower-objective set
\[
    \mathcal{L}:=\{(\BFx_1,\BFtheta)\in\mathcal{C}:\BFc^\top\BFx_1+\phi_0(\BFtheta)<v^*\}.
\]
Assumption~\ref{asmp:support_reduction_regular} implies that this set is convex. In the $p$-dimensional affine hull of $\mathcal{C}$, the finite family
\(
    \{\mathcal{L}\}
    \cup
    \{\mathcal{D}_{\BFs}:\BFs\in\mathcal{S}\}
\)
has an empty intersection. Otherwise, a point in the intersection would be feasible for the full master and have an objective value below $v^*$.

Every subfamily consisting only of vertex blocks $\mathcal D_s$ has a nonempty intersection because a full-master feasible point belongs to every $\mathcal{D}_{\BFs}$. Hence, any empty-intersection subfamily supplied by Helly's theorem must contain $\mathcal{L}$, leaving at most $p$ vertex blocks. Let $\mathcal{S}_{\mathrm{obj}}$ index these blocks. Then $\mathcal{L}\cap\bigcap_{\BFs\in\mathcal{S}_{\mathrm{obj}}}\mathcal{D}_{\BFs}$ is empty and $|\mathcal{S}_{\mathrm{obj}}|\leq p$. The restricted master therefore has an objective value of at least $v^*$. Because it is a relaxation of the full master, its objective value is also at most $v^*$ and must equal $v^*$.
\hfill \Halmos
\endproof

Theorem~\ref{thm:size_extreme_point_set} establishes that at most $p$ extreme points can preserve the optimal objective value. The result is existential and does not provide a constructive selection procedure.
Moreover, Theorem~\ref{thm:size_extreme_point_set} concerns only the optimal objective value. It does not imply that the retained extreme points reproduce the recourse value function $\psi(\BFx_1^*,\BFxi)$ pointwise over $\Xi$ at a full-master optimizer $\BFx_1^*$. In particular, it is possible that
\[
    \exists\,\BFxi\in\Xi
    \quad\text{such that}\quad
    \max_{\BFs\in\mathcal{S}_{\mathrm{obj}}}
    \BFd(\BFxi,\BFx_1^*)^\top\BFs
    \neq
    \psi(\BFx_1^*,\BFxi).
\]
Additional pieces may therefore be required even after the optimal first-stage value has been found. 
% The next section develops one procedure for solving the first-stage problem and another for recovering the recourse value at every realization.

\subsection{Iterative Extreme-Point Generation Algorithm}
We design two extreme-point generation (EPG) phases with different targets. EPG for the objective (EPG-O) performs objective certification by identifying pieces sufficient to establish the optimal objective and first-stage decision. The EPG for the value function (EPG-V) fixes the EPG-O decision $\BFx_1^*$ and adds pieces until they reproduce the exact recourse value function $\psi(\BFx_1^*,\BFxi)$ pointwise over $\Xi$. Once EPG-V reproduces this value function, the recovery argument in Proposition~\ref{prop:optimal_decision_rule_continuous} yields an optimal second-stage policy.

The algorithm uses a classical master-subproblem framework. At each iteration, the master retains only the generated extreme points and therefore relaxes the full extreme-point reformulation \eqref{eq:two_stage_reformulation_extreme_points_continuous}. The subproblem searches over $\Xi$ and $\mathcal{S}$ for the largest violation among the constraints. This joint search is the principal computational challenge. We use approximate separation to discover promising pieces efficiently and retain exact global separation for certification. The generated pieces serve two distinct purposes. EPG-O certifies the optimal objective and first-stage decision, whereas EPG-V completes the recourse-value representation needed for policy recovery. Section~\ref{sec:numerical_experiments} evaluates the proposed algorithm numerically.
% A similar algorithmic design appears in \cite{wang2022secondorder} for a specific Wasserstein DRO reformulation with $\BFXi = \mathbb{R}^{N_\xi}$. Allowing a general polyhedral $\Xi$ creates a central computational challenge in the subproblem. We partially address this challenge with an approximate separation scheme that is particularly useful during early iterations. 

\subsubsection{Relaxed Master Problem}
We first define the relaxed master problem (RMP) solved at each EPG iteration. At iteration $t$, let $\mathcal{S}_t=\{\BFs_1,\ldots,\BFs_{S_t}\}$ denote the distinct generated vertices, where $S_t=|\mathcal{S}_t|$. The RMP retains the full region $\mathcal{X}_1\times\Theta$ but includes only the constraints indexed by $\mathcal{S}_t$. Let $\mathrm{RMP}_t$ denote its optimal value at iteration $t$. The RMP is
\begin{equation}
    \label{eq:rmp_continuous_exact}
    \begin{aligned}
        \mathrm{RMP}_t = \min_{\BFx_1, \BFtheta}\ & \BFc^\top \BFx_1 + \phi_0(\BFtheta)\\
        \text{s.t.}\ & \sup_{\BFxi \in \Xi}\left\{\BFd(\BFxi,\BFx_1)^\top\BFs_i - \phi_\ell(\BFtheta,\BFxi)\right\} \leq 0, \qquad \forall i \in [S_t],\ \ell \in [L], \\
        & \BFx_1 \in \mathcal{X}_1, \BFtheta \in \Theta.
    \end{aligned}
\end{equation}
Because $\mathcal{S}_t$ is much smaller than $\mathcal{S}$, $\mathrm{RMP}_t$ is a relaxation and provides a lower bound on the full problem \eqref{eq:two_stage_reformulation_extreme_points_continuous}. We therefore let $\mathrm{LB}_t := \mathrm{RMP}_t$ denote the lower bound at iteration $t$.

\subsubsection{Objective Separation}
Solving the relaxed master problem \eqref{eq:rmp_continuous_exact} at iteration $t$ yields the candidate $(\BFx_1^{(t)}, \BFtheta^{(t)})$. We next seek an extreme point whose cut removes this candidate by maximizing the violation of the full constraints in \eqref{eq:two_stage_reformulation_extreme_points_continuous}.

Because problem \eqref{eq:two_stage_reformulation_extreme_points_continuous} contains $L$ robust constraint families, it suffices to find the largest violation among them. For each $\ell\in[L]$, we first define the worst violation of the $\ell$-th constraint family and then maximize over $\ell$. The violation subproblem $V^{(t)}$ is
\begin{equation}
    \label{eq:violation_subproblem_continuous}
    V^{(t)} := \max_{\ell \in [L]} V^{(t)}_\ell, \quad V_\ell^{(t)} := \max_{\BFxi \in \Xi,\; \BFz \in \mathcal{Z}}\ \left[\BFd(\BFxi, \BFx_1^{(t)})^\top \BFz - \phi_\ell(\BFtheta^{(t)},\BFxi)\right].
\end{equation}
A positive $V^{(t)}$ identifies a cut that removes the current RMP optimizer and leads to a nondecreasing lower bound. A nonpositive value certifies that every extreme-point constraint is satisfied, but this certification requires a globally solved exact oracle to problem \eqref{eq:violation_subproblem_continuous}.

When $V^{(t)} > 0$, the violation subproblem also yields a natural upper bound on problem \eqref{eq:two_stage_reformulation_extreme_points_continuous}. We use $V^{(t)}$ to shift the ambiguity epigraph by a common intercept. Suppose there exists a direction $\BFeta$ such that, for every $\BFtheta\in\Theta$ and every $\delta\geq0$, $\BFtheta+\delta\BFeta\in\Theta$ and
\[
    \phi_\ell(\BFtheta+\delta\BFeta,\BFxi)
    \geq
    \phi_\ell(\BFtheta,\BFxi)+\delta,
    \qquad
    \forall \BFxi\in\Xi,\ \ell\in[L].
\]
For a candidate RMP solution, set
\(
    \delta_t:=\max\{0,V^{(t)}\},
    \bar{\BFtheta}^{(t)}:=\BFtheta^{(t)}+\delta_t\BFeta.
\)
The following proposition gives the resulting repair certificate.
\begin{proposition}
    \label{prop:continuous_intercept_repair_upper_bound}
    Let $(\BFx_1^{(t)},\BFtheta^{(t)})$ be an optimal solution of the relaxed master problem at iteration $t$, with lower bound $\mathrm{LB}_t=\mathrm{RMP}_t$. Suppose $\BFx_1^{(t)}\in\mathcal{X}_1$ and the intercept-shift property above holds. Then $(\BFx_1^{(t)},\bar{\BFtheta}^{(t)})$ is feasible for the master problem \eqref{eq:two_stage_reformulation_extreme_points_continuous}, and
    \[
        \mathrm{UB}_t
        =
        \BFc^\top \BFx_1^{(t)}+\phi_0(\bar{\BFtheta}^{(t)})
        =
        \mathrm{LB}_t+\phi_0(\bar{\BFtheta}^{(t)})-\phi_0(\BFtheta^{(t)})
    \]
    is a valid upper bound. If $V^{(t)}\leq0$, then the relaxed-master solution is optimal for the full master problem.
\end{proposition}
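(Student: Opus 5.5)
The plan is to verify feasibility of the repaired point constraint by constraint, and then read off the bounds. Fix an arbitrary $\ell\in[L]$, $\BFxi\in\Xi$ and $\BFs\in\mathcal{S}$. Since $\mathcal{S}\subseteq\mathcal{Z}$, the pair $(\BFxi,\BFs)$ is feasible for the inner maximization defining $V^{(t)}_\ell$ in \eqref{eq:violation_subproblem_continuous}. Hence
\[
\BFd(\BFxi,\BFx_1^{(t)})^\top\BFs-\phi_\ell(\BFtheta^{(t)},\BFxi)\leq V^{(t)}_\ell\leq V^{(t)}\leq\delta_t .
\]
Next I apply the intercept-shift property with $\BFtheta=\BFtheta^{(t)}$ and $\delta=\delta_t\geq0$. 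It gives $\bar{\BFtheta}^{(t)}\in\Theta$ and
\[
\phi_\ell(\bar{\BFtheta}^{(t)},\BFxi)\geq\phi_\ell(\BFtheta^{(t)},\BFxi)+\delta_t\geq\BFd(\BFxi,\BFx_1^{(t)})^\top\BFs.
\]
Together with the hypothesis $\BFx_1^{(t)}\in\mathcal{X}_1$, this shows that every constraint of \eqref{eq:two_stage_reformulation_extreme_points_continuous} holds at $(\BFx_1^{(t)},\bar{\BFtheta}^{(t)})$. The point is therefore feasible for the full master.

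\textbf{Upper bound.} Since the point is feasible for a minimization problem, its objective $\BFc^\top\BFx_1^{(t)}+\phi_0(\bar{\BFtheta}^{(t)})$ is at least the optimal value of \eqref{eq:two_stage_reformulation_extreme_points_continuous}, so it is a valid upper bound. The displayed identity follows from optimality of $(\BFx_1^{(t)},\BFtheta^{(t)})$ for the RMP, which gives $\mathrm{LB}_t=\BFc^\top\BFx_1^{(t)}+\phi_0(\BFtheta^{(t)})$. Adding and subtracting $\phi_0(\BFtheta^{(t)})$ then yields the stated expression.

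\textbf{Optimality when $V^{(t)}\leq0$.} In this case $\delta_t=0$, so $\bar{\BFtheta}^{(t)}=\BFtheta^{(t)}$ and $\mathrm{UB}_t=\mathrm{LB}_t$. The RMP is a relaxation of the full master, so its value $\mathrm{LB}_t$ is at most the full master's value. That value is in turn at most $\mathrm{UB}_t=\mathrm{LB}_t$, so all three coincide. Since the RMP optimizer is feasible for the full master, it attains this common value and is therefore optimal.

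\textbf{Main obstacle.} The only subtle step is the first inequality. It relies on $V^{(t)}$ being the exact global maximum over $\Xi\times\mathcal{Z}$, because the argument needs the bound for every pair $(\BFxi,\BFs)$, not just for the pairs the oracle happened to find. I would state this explicitly, in line with the paper's remark that certification requires a globally solved exact oracle. I would also note that maximizing over $\mathcal{Z}$ rather than only over $\mathcal{S}$ is harmless: it can only increase $V^{(t)}$, which preserves the direction of the inequality.
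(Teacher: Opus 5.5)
Your proof is correct and takes the same approach as the paper. You bound every extreme-point violation at $(\BFx_1^{(t)},\BFtheta^{(t)})$ by $V^{(t)}\leq\delta_t$, apply the intercept-shift inequality to get full-master feasibility, obtain the identity from $\mathrm{LB}_t=\BFc^\top\BFx_1^{(t)}+\phi_0(\BFtheta^{(t)})$, and use $\delta_t=0$ when $V^{(t)}\leq0$; you are only more explicit than the paper in recording that the shift keeps $\bar{\BFtheta}^{(t)}\in\Theta$ and that the bound needs the exact global maximum defining $V^{(t)}$.
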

% ----------- Original --------------
% Consequently, the incumbent upper bound can be updated with this repaired solution whenever the shift property is available. 
% ----------- Revised ---------------
% Whenever the shift property is available, this repaired solution can therefore update the incumbent upper bound.

\proof{Proof.}
The definitions of $V^{(t)}$ and $\delta_t$ imply, for every $\BFxi\in\Xi$, $\BFz\in\mathcal{Z}$, and $\ell\in[L]$,
\[
    \phi_\ell(\bar{\BFtheta}^{(t)},\BFxi)
    \geq\phi_\ell(\BFtheta^{(t)},\BFxi)+\delta_t
    \geq\BFd(\BFxi,\BFx_1^{(t)})^\top\BFz.
\]
Together with $\BFx_1^{(t)}\in\mathcal{X}_1$, these inequalities establish full-master feasibility, and the upper-bound identity follows by substitution. If $V^{(t)}\leq0$, then $\delta_t=0$, so the feasible RMP optimizer attains its lower bound in the full master and is optimal.
\hfill \Halmos
\endproof

For the MAD reformulation \eqref{eq:two_stage_reformulation_mad}, this common-intercept condition holds with $\BFeta$ corresponding to the scalar component $\theta_0$. Increasing $\theta_0$ by $\delta_t$ shifts both $\theta_0+\BFtheta_1^\top\BFxi+\BFtheta_2^\top|\BFxi-\BFmu|$ and $\phi_0(\BFtheta)=\theta_0+\BFmu^\top\BFtheta_1+\BFsigma^\top\BFtheta_2$ upward by the same amount. For the Wasserstein reformulation \eqref{eq:two_stage_reformulation_wasserstein}, the analogous direction shifts the relevant sample-wise intercept coordinates simultaneously. In both cases, the repaired objective satisfies
\[
    \mathrm{UB}_t
    =\mathrm{LB}_t+\delta_t
    =\mathrm{LB}_t+\max\{0,V^{(t)}\}.
\]
\paragraph{Approximate Separation with Candidate Worst-Case Scenarios}
The exact violation problem \eqref{eq:violation_subproblem_continuous} is bilinear and difficult to solve. We obtain an approximation by restricting the scenario search to candidate worst-case scenarios generated by the current relaxed master problem \eqref{eq:rmp_continuous_exact}.

For each $\ell\in[L]$, let $\Xi^{(t)}_\ell$ denote the finite collection of worst-case scenarios, and let $\Xi^{(t)} = \bigcup_{\ell\in[L]}\Xi^{(t)}_\ell$ denote the union of all candidate worst-case scenarios.
These worst-case scenarios can be identified through complementary slackness in the dual reformulation of the robust counterpart.
% For example, if $\BFalpha_{i,\ell}^{(t)}$ is the optimal dual variable for the support constraint associated with $(\BFs_i,\ell)$ at iteration $t$, then the corresponding worst-case scenario $\BFxi_{i,\ell}^{(t)}$ satisfies $\BFalpha_{i,\ell}^{(t)\top}(\BFH\BFxi_{i,\ell}^{(t)} - \BFh) = 0$. Hence, we can obtain a candidate worst-case scenario by solving a linear equation of the basis matrix corresponding to $\BFalpha_{i,\ell}^{(t)}$.
% When adding a new objective piece, we want to identify a dual extreme point that violates one of the currently protected constraint families at one of these candidate scenarios. 
% ----------- Original --------------
% Using $\Xi^{(t)}$ as a candidate scenario set, we can define an approximate violation subproblem $V'^{(t)}$ as
% ----------- Revised ---------------
Using $\Xi^{(t)}$ as the candidate scenario set, define the approximate violation subproblem $V'^{(t)}$ as
\begin{equation}
    \label{eq:violation_subproblem_continuous_approx}
    V'^{(t)}
    :=
    \max_{\BFz \in \mathcal{Z}}
    \max_{\ell\in[L]}
    \max_{\BFxi \in \Xi^{(t)}} 
    \left\{
        \BFd(\BFxi,\BFx_1^{(t)})^\top \BFz - \phi_\ell(\BFtheta^{(t)},\BFxi)
    \right\}.
\end{equation}
The approximate subproblem $V'^{(t)}$ avoids bilinearity because $\Xi^{(t)}$ is finite. Clearly, $V'^{(t)} \leq V^{(t)}$. If $V'^{(t)} > 0$, we add an extreme-point optimizer to $\mathcal{S}_t$ as a discovery cut.
If $V'^{(t)} \leq 0$, however, we cannot conclude that $V^{(t)} \leq 0$. We then solve the exact violation subproblem $V^{(t)}$ to determine whether a violated constraint exists outside the candidate set. Because of its low computational cost, the approximate violation subproblem $V'^{(t)}$ is particularly useful in early iterations.

\subsubsection{Objective Certification and Value Completion Procedures}
\paragraph{EPG-O for Objective Certification.}
EPG-O iteratively solves the RMP and the objective-separation problem, generating extreme points until the RMP solution is certified as optimal for the full problem. Algorithm~\ref{alg:continuous_iterative} summarizes the procedure.
\par
% \begin{algorithm}[H]
% \caption{EPG-O: Extreme-point generation for objective certification}
% \label{alg:continuous_iterative}
% \KwIn{Problem data, a nonempty seed $\mathcal{S}_1\subseteq\mathcal{S}$, and $\varepsilon_{\mathrm{obj}}\geq0$}
% Initialize $t=1$.\;
% \While{true}{
% Solve RMP~\eqref{eq:rmp_continuous_exact} to obtain $(\BFx_1^{(t)},\BFtheta^{(t)})$.\;
% Solve the approximate violation problem~\eqref{eq:violation_subproblem_continuous_approx} and obtain $V'^{(t)}$. If $V'^{(t)} > 0$, select a maximizing dual solution as an extreme point $\BFs^\star$.\;
% Otherwise, solve the exact violation problem~\eqref{eq:violation_subproblem_continuous} and obtain an extreme point $\BFs^\star$.\;
% \If{$V^{(t)}\leq\varepsilon_{\mathrm{obj}}$}{
% Delete from $\mathcal{S}_t$ any vertices that do not affect the optimal value.\;
% \KwRet{$(\BFx_1^{(t)},\BFtheta^{(t)},\mathcal{S}_t)$}\;
% }
% $\mathcal{S}_{t+1}\leftarrow\mathcal{S}_t\cup\{\BFs^\star\}$; $t\leftarrow t+1$.\;
% }
% \end{algorithm}
\begin{algorithm}[H]
\caption{EPG-O for objective certification}
\label{alg:continuous_iterative}
\KwIn{Problem data, a nonempty seed $\mathcal{S}_1\subseteq\mathcal{S}$, and tolerances $\varepsilon_{\mathrm{disc}},\varepsilon_{\mathrm{obj}}\geq0$}
Initialize $t=1$.\;
\While{true}{
    Solve RMP~\eqref{eq:rmp_continuous_exact} to obtain
    $(\BFx_1^{(t)},\BFtheta^{(t)})$.\;

    Solve the approximate separation problem
    \eqref{eq:violation_subproblem_continuous_approx}
    to obtain $V'^{(t)}$ and a maximizing vertex
    $\widehat{\BFs}^{(t)}$.\;

    \eIf{$V'^{(t)}>\varepsilon_{\mathrm{disc}}$}{
        $\BFs^\star\leftarrow\widehat{\BFs}^{(t)}$.\;
    }{
        Solve the exact separation problem
        \eqref{eq:violation_subproblem_continuous}
        to obtain $V^{(t)}$ and a maximizing vertex
        $\BFs^\star$.\;

        \If{$V^{(t)}\leq\varepsilon_{\mathrm{obj}}$}{
            Compress $\mathcal{S}_t$ to $\mathcal{S}_{\mathrm{obj}}$ by deleting vertices until no further deletion preserves the RMP value.\;
            \KwRet{$(\BFx_1^{(t)},\BFtheta^{(t)},
            \mathcal{S}_{\mathrm{obj}})$}\;
        }
    }

    $\mathcal{S}_{t+1}\leftarrow
    \mathcal{S}_t\cup\{\BFs^\star\}$;
    $t\leftarrow t+1$.\;
}
\end{algorithm}

% ----------- Original --------------
% Because the extreme-point set $\mathcal{S}$ is finite, EPG-O terminates after finitely many iterations. At zero objective tolerance, exact separation certifies an optimal solution of the DRO epigraph \eqref{eq:two_stage_reformulation_extreme_points_continuous}. EPG-O usually generates more than $p$ vertices before termination, so we can compress the set based on Theorem \ref{thm:size_extreme_point_set}. The following theorem formalizes these statements. We admit that the bilinear violation subproblem \eqref{eq:violation_subproblem_continuous} is difficult in general and solving it to optimality may make the algorithm computationally expensive.
% ----------- Revised ---------------
Because $\mathcal{S}$ is finite, exact EPG-O terminates finitely. It may generate more than $p$ vertices, so Theorem~\ref{thm:size_extreme_point_set} supports post-solve compression. We admit that the exact bilinear violation problem \eqref{eq:violation_subproblem_continuous} is difficult and may be computationally expensive.
\begin{theorem}[Objective exactness of EPG-O]
    \label{thm:continuous_convergence}
    Suppose the fixed-recourse setting and Assumption~\ref{asmp:support_reduction_regular} hold, every RMP and compression test is solved exactly, the exact separator returns a maximizing extreme point, and $\varepsilon_{\mathrm{obj}}=0$. Algorithm~\ref{alg:continuous_iterative} terminates finitely. Its output $(\BFx_1^{(t)},\BFtheta^{(t)})$ is optimal for \eqref{eq:two_stage_reformulation_extreme_points_continuous}, and $|\mathcal{S}_{\mathrm{obj}}|\leq p$.
\end{theorem}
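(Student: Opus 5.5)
I would split the argument into three parts: finite termination, optimality of the returned candidate, and the size bound after compression.

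\emph{Finite termination.} The key observation is that every non-terminating iteration adds a vertex $\BFs^\star\notin\mathcal{S}_t$. Since $\mathcal{S}$ is finite, this bounds the number of iterations by $|\mathcal{S}|$. Consider first a discovery iteration, where $V'^{(t)}>\varepsilon_{\mathrm{disc}}\geq 0$. The maximizing vertex $\widehat{\BFs}^{(t)}$ then satisfies $\BFd(\BFxi,\BFx_1^{(t)})^\top\widehat{\BFs}^{(t)}-\phi_\ell(\BFtheta^{(t)},\BFxi)>0$ for some $\ell$ and some $\BFxi\in\Xi^{(t)}\subseteq\Xi$. Hence $(\BFx_1^{(t)},\BFtheta^{(t)})$ violates the robust constraint indexed by $\widehat{\BFs}^{(t)}$. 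The RMP optimizer satisfies every constraint indexed by $\mathcal{S}_t$, so $\widehat{\BFs}^{(t)}\notin\mathcal{S}_t$. Now consider an exact-separation iteration that does not terminate, so $V^{(t)}>0$. A maximizing extreme point exists because the bilinear objective is linear in $\BFz$ for fixed $\BFxi$ and $\mathcal{Z}$ is a polyhedron, so the maximum over $\mathcal{Z}$ (finite, since $V^{(t)}$ is assumed attained) is achieved at a vertex. The same argument then shows this vertex is new. One technical point needs care: the exact oracle may return a maximizing $\BFz$ that is not a vertex. I would handle this with the hypothesis that the separator returns a maximizing extreme point, and with the representation $\max_{\BFz\in\mathcal{Z}}\BFd^\top\BFz=\max_{\BFs\in\mathcal{S}}\BFd^\top\BFs$ from \eqref{eq:extreme_point_representation_continuous}, valid for $\BFx_1^{(t)}\in\mathcal{X}_1$, which is enforced in the RMP.

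\emph{Optimality at termination.} The algorithm stops only when $V^{(t)}\leq\varepsilon_{\mathrm{obj}}=0$. By Proposition~\ref{prop:continuous_intercept_repair_upper_bound} (its last sentence), the RMP optimizer is then feasible for \eqref{eq:two_stage_reformulation_extreme_points_continuous}. Its value $\mathrm{RMP}_t$ is a lower bound, because the RMP is a relaxation, so the optimizer is optimal. The same conclusion can be reached directly, without the intercept-shift property. $V^{(t)}\leq 0$ means $\phi_\ell(\BFtheta^{(t)},\BFxi)\geq\BFd(\BFxi,\BFx_1^{(t)})^\top\BFz$ for all $\BFz\in\mathcal{Z}$, in particular for all $\BFs\in\mathcal{S}$. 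I would state this direct route to avoid relying on the intercept-shift property.

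\emph{Compression bound.} This is the step I expect to be the main obstacle, since greedy deletion yields only an inclusion-minimal set, not a minimum one. At termination, the RMP over $\mathcal{S}_t$ has value $v^*$, the full-master optimum. Let $\mathcal{S}_{\mathrm{obj}}$ be the compressed set, so $\mathrm{RMP}(\mathcal{S}_{\mathrm{obj}})=v^*$ and removing any single vertex strictly lowers the value. Apply the Helly argument from the proof of Theorem~\ref{thm:size_extreme_point_set}, but to the family $\{\mathcal{L}\}\cup\{\mathcal{D}_{\BFs}:\BFs\in\mathcal{S}_{\mathrm{obj}}\}$ in place of all of $\mathcal{S}$. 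This family has empty intersection because the restricted value is $v^*$, while every subfamily of vertex blocks alone is nonempty, since it contains the full-master optimizer. Helly's theorem therefore gives a subset $\mathcal{T}\subseteq\mathcal{S}_{\mathrm{obj}}$ with $|\mathcal{T}|\leq p$ and $\mathcal{L}\cap\bigcap_{\BFs\in\mathcal{T}}\mathcal{D}_{\BFs}=\emptyset$, so $\mathrm{RMP}(\mathcal{T})=v^*$. By monotonicity of the RMP value in the vertex set, every $\mathcal{S}'$ with $\mathcal{T}\subseteq\mathcal{S}'\subseteq\mathcal{S}_{\mathrm{obj}}$ also has value $v^*$. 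If $\mathcal{S}_{\mathrm{obj}}\neq\mathcal{T}$, deleting any $\BFs\in\mathcal{S}_{\mathrm{obj}}\setminus\mathcal{T}$ preserves the value, which contradicts the stopping rule of the compression. Hence $\mathcal{S}_{\mathrm{obj}}=\mathcal{T}$ and $|\mathcal{S}_{\mathrm{obj}}|\leq p$.

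Two points remain to check: Helly's theorem is applied in the $p$-dimensional affine hull of $\mathcal{C}$, and the convexity of $\mathcal{L}$ and of each $\mathcal{D}_{\BFs}$ follows from Assumption~\ref{asmp:support_reduction_regular}. Finally, compression leaves the returned candidate unchanged, so the output remains optimal.
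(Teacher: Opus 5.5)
Your proposal is correct and follows the paper's proof essentially step for step. Both arguments get finite termination from the fact that each non-terminating iteration adds a new vertex, get optimality from feasibility of the RMP optimizer once $V^{(t)}\leq 0$, and get the size bound by applying the Helly-based support argument to the compressed set together with its inclusion-minimality. Your explicit handling of discovery iterations, where $V'^{(t)}>\varepsilon_{\mathrm{disc}}\geq 0$ already certifies a new vertex, fills in a detail the paper's proof leaves implicit. The same holds for your monotonicity sandwich in the compression step.
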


\proof{Proof.}
Every RMP retains $\BFx_1\in\mathcal{X}_1$ and omits only extreme-point constraints, so its value is a lower bound. If $V^{(t)}>0$, attained separation yields a maximizing extreme point $\BFs^*\in\mathcal{S}$ that is not in $\mathcal{S}_t$, because all constraints associated with generated points hold for every $\BFxi$ and $\ell$. Thus, each positive violation adds a new vertex, and the finiteness of $\mathcal{S}$ implies finite termination. At zero-tolerance termination, $V^{(t)}\leq0$ means
\[
    \phi_\ell(\BFtheta^{(t)},\BFxi)\geq
    \BFd(\BFxi,\BFx_1^{(t)})^\top\BFz,\qquad
    \forall\BFxi\in\Xi,\ \BFz\in\mathcal{Z},\ \ell\in[L].
\]
The RMP optimizer is therefore feasible for the full master and is globally optimal. Exact compression preserves this value and leaves an inclusion-minimal set. Applying Theorem~\ref{thm:size_extreme_point_set} to the restricted master indexed by that set yields a value-preserving subset of size at most $p$. Inclusion minimality forces this subset to be the full compressed set, so $|\mathcal{S}_{\mathrm{obj}}|\leq p$.
\hfill \Halmos
\endproof

\paragraph{EPG-V for Value Completion.}
EPG-O identifies only the extreme points needed to certify the optimal objective value. Completing the second-stage recourse value requires additional extreme points. EPG-V follows EPG-O and fixes its certified first-stage decision $\BFx_1^*$. Starting from a nonempty set $\mathcal{S}'\subseteq\mathcal{S}$, EPG-V audits whether the generated pieces are value-complete at $\BFx_1^*$.

Suppose Assumption~\ref{asmp:nondegenerate_policy_recovery} holds. For a current nonempty set $\mathcal{T}\subseteq\mathcal{S}$, let $\mathcal{J}_i=\operatorname{supp}(\BFs_i)$ and define the associated affine basis decision as
\[
    \BFx_{2,i}^*(\BFxi)
    :=
    \BFC_{\mathcal{J}_i}^{-1}
    \BFd_{\mathcal{J}_i}(\BFxi,\BFx_1^*),
    \qquad \BFs_i\in\mathcal{T}.
\]
By the correspondence between dual optimality and primal feasibility, if $\mathcal{T}$ is not value-complete, some basis decision $\BFx_{2,i}^*(\BFxi)$ is infeasible for a realization $\BFxi\in\Xi$. We therefore seek such a witness $\BFxi$ and add the corresponding extreme point $\BFs$ to $\mathcal{T}$ until the generated pieces are value-complete.

The current dominance cell of $\BFs_i$ is
\begin{equation}
    \label{eq:dominance_cell_continuous}
    \widehat{\mathcal{R}}_i(\mathcal{T})
    :=
    \left\{
    \BFxi\in\Xi:
    \BFd(\BFxi,\BFx_1^*)^\top\BFs_i
    \geq
    \BFd(\BFxi,\BFx_1^*)^\top\BFs_j,\ 
    \forall \BFs_j\in\mathcal{T}
    \right\}.
\end{equation}
For each nonempty cell and each $k\notin\mathcal{J}_i$, define the row-wise feasibility audit
\begin{equation}
    \label{eq:feasibility_audit_continuous}
    \Delta_{ik}(\mathcal{T})
    :=
    \max_{\BFxi\in\widehat{\mathcal{R}}_i(\mathcal{T})}
    \left\{
    d_k(\BFxi,\BFx_1^*)
    -
    \BFC_k\BFx_{2,i}^*(\BFxi)
    \right\}.
\end{equation}
Because $\BFd$ and the basis decisions are affine in $\BFxi$, the cells and their audit problems admit linear programming representations when $\Xi$ is polyhedral.
Algorithm~\ref{alg:continuous_policy_completion} summarizes the value completion procedure.
\par
\begin{algorithm}[H]
\caption{EPG-V: Basis-assisted critical-region value completion}
\label{alg:continuous_policy_completion}
\KwIn{A nonempty EPG-O output $\mathcal{S}'\subseteq\mathcal{S}$ and decision $\BFx_1$, and $\varepsilon_{\mathrm{aud}}\geq0$}
Initialize $\mathcal{S}_{\mathrm{val}}\leftarrow\mathcal{S}'$.\;
Construct all nonempty cells $\widehat{\mathcal{R}}_i(\mathcal{S}_{\mathrm{val}})$ and solve their audits $\Delta_{ik}(\mathcal{S}_{\mathrm{val}})$.\;
\While{some $\Delta_{ik}(\mathcal{S}_{\mathrm{val}})>\varepsilon_{\mathrm{aud}}$}{
Choose such a pair $(i,k)$ and a maximizing witness $\widehat{\BFxi}\in\widehat{\mathcal{R}}_i(\mathcal{S}_{\mathrm{val}})$.\;
Solve the recourse-dual LP at $\widehat{\BFxi}$ to obtain an optimal extreme point $\BFs^{\mathrm{new}}$. Then set $\mathcal{S}_{\mathrm{val}}\leftarrow\mathcal{S}_{\mathrm{val}}\cup\{\BFs^{\mathrm{new}}\}$.\;
Reconstruct all nonempty cells $\widehat{\mathcal{R}}_i(\mathcal{S}_{\mathrm{val}})$ and solve their audits $\Delta_{ik}(\mathcal{S}_{\mathrm{val}})$.\;
}
Prune $\mathcal{S}_{\mathrm{val}}$ by deleting extreme points whose dominance cell $\widehat{\mathcal{R}}_i(\mathcal{S}_{\mathrm{val}})$ has empty relative interior in \(\Xi\).\;
\KwRet{The audited and compressed value-piece set $\mathcal{S}_{\mathrm{val}}$}\;
\end{algorithm}
We next prove that Algorithm~\ref{alg:continuous_policy_completion} terminates with a set $\mathcal{S}_{\mathrm{val}}$ that is value-complete at the decision $\BFx_1$.

\begin{theorem}[Finite value completion]
    \label{thm:value_completion_continuous}
    Suppose Assumption~\ref{asmp:nondegenerate_policy_recovery} holds, $\varepsilon_{\mathrm{aud}}=0$, and every audit, recourse-dual LP is solved exactly. Algorithm~\ref{alg:continuous_policy_completion} terminates finitely, and its returned set $\mathcal{S}_{\mathrm{val}}$ is value-complete at $\BFx_1$ in the sense that
    \[
        \max_{\BFs\in\mathcal{S}_{\mathrm{val}}}
        \BFd(\BFxi,\BFx_1)^\top\BFs
        =
        \psi(\BFx_1,\BFxi),
        \qquad
        \forall \BFxi\in\Xi.
    \]
\end{theorem}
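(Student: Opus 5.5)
Two things need proving: finite termination, and value-completeness of the returned set. Both will come from the complementary-slackness correspondence behind Proposition~\ref{prop:optimal_decision_rule_continuous}.

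\emph{Termination.} I will show that each pass of the while-loop adds a vertex not already in $\mathcal{S}_{\mathrm{val}}$; finiteness of $\mathcal{S}$ then gives finite termination. Suppose $\Delta_{ik}(\mathcal{S}_{\mathrm{val}})>0$, and let $\widehat{\BFxi}\in\widehat{\mathcal{R}}_i(\mathcal{S}_{\mathrm{val}})$ be its maximizing witness. Then the basis decision $\BFx_{2,i}^*(\widehat{\BFxi})$ violates row $k$, so it is primal infeasible.

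Now suppose the new optimal extreme point $\BFs^{\mathrm{new}}$ already belonged to $\mathcal{S}_{\mathrm{val}}$. Since $\widehat{\BFxi}$ lies in the dominance cell of $\BFs_i$, we have $\BFd(\widehat{\BFxi},\BFx_1)^\top\BFs_i\ge \BFd(\widehat{\BFxi},\BFx_1)^\top\BFs^{\mathrm{new}}=\psi(\BFx_1,\widehat{\BFxi})$. So $\BFs_i$ would itself be dual optimal at $\widehat{\BFxi}$. Under Assumption~\ref{asmp:nondegenerate_policy_recovery}, $\BFs_i$ has support exactly $\mathcal{J}_i$ with $\BFC_{\mathcal{J}_i}$ nonsingular. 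Complementary slackness with any primal optimum $\BFx_2$ forces $\BFC_{\mathcal{J}_i}\BFx_2=\BFd_{\mathcal{J}_i}$, hence $\BFx_2=\BFx_{2,i}^*(\widehat{\BFxi})$. That makes $\BFx_{2,i}^*(\widehat{\BFxi})$ feasible, a contradiction.

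For the primal optimum to exist we need $\BFx_1\in\mathcal{X}_1$, which holds because $\BFx_1$ is the EPG-O output. Primal feasibility plus dual feasibility ($\mathcal{Z}\neq\emptyset$) then give attainment. We also use that the recourse-dual LP returns a vertex, which a simplex solve does.

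\emph{Correctness at exit.} At exit every $\Delta_{ik}\le 0$. Fix $\BFxi\in\Xi$. The cells $\widehat{\mathcal{R}}_i(\mathcal{S}_{\mathrm{val}})$ cover $\Xi$, since the max over the finite set $\mathcal{S}_{\mathrm{val}}$ is attained, so $\BFxi$ lies in some cell $i$. By construction the rows in $\mathcal{J}_i$ hold with equality at $\BFx_{2,i}^*(\BFxi)$, and $\Delta_{ik}\le0$ gives the remaining rows, so $\BFx_{2,i}^*(\BFxi)$ is primal feasible. Its cost is
\[
\BFb^\top\BFx_{2,i}^*=\BFs_i^\top\BFC\BFx_{2,i}^*=\BFs_{i,\mathcal{J}_i}^\top\BFd_{\mathcal{J}_i}=\BFd^\top\BFs_i,
\]
using $\BFC^\top\BFs_i=\BFb$ and $\operatorname{supp}(\BFs_i)=\mathcal{J}_i$. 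Weak duality then yields
\[
\psi\le \BFd^\top\BFs_i=\max_{\BFs\in\mathcal{S}_{\mathrm{val}}}\BFd^\top\BFs\le\psi,
\]
where the last inequality is \eqref{eq:extreme_point_representation_continuous}. Hence equality holds for every $\BFxi$.

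\emph{Pruning.} The main obstacle is showing that the final pruning step preserves value-completeness. Deleting vertices whose cells have empty relative interior in $\Xi$ preserves the pointwise max on a dense subset: the union of the full-dimensional (relative to $\operatorname{aff}\Xi$) cells of the surviving pieces is closed and covers $\Xi$ up to a nowhere-dense set. Pruning only enlarges the surviving cells. Both sides of the identity are continuous in $\BFxi$ — the max of finitely many affine functions, and $\psi$ as a finite piecewise-affine function on the compact set $\Xi$ — so equality extends to all of $\Xi$.

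One care point: deleting several thin pieces at once could in principle leave a gap. I would handle this by noting that every point of $\Xi$ is a limit of points lying in relative interiors of full-dimensional cells, since finitely many polyhedral cells cover the convex set $\Xi$. This is the step to write most carefully.
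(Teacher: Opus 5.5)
Your proposal is correct and follows the paper's proof. Termination works because a positive audit at the witness rules out dual optimality of the dominant piece $\BFs_i$ (by complementary slackness and nonsingularity of $\BFC_{\mathcal{J}_i}$), so the new optimal vertex is not already in the set. Exit correctness holds because nonpositive audits make the dominant basis decision feasible with cost $\BFd^\top\BFs_i$, which closes the weak-duality sandwich. Your density-and-continuity argument for the pruning step, based on the union of thin cells being a closed nowhere-dense subset of $\Xi$, is more careful than the paper, which only asserts that post-compression deletion preserves the conclusion.
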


\proof{Proof.}
Let $\mathcal{T}$ be the current set. If $\Delta_{ik}(\mathcal{T})>0$ at a maximizing witness $\widehat{\BFxi}$, then the basis decision associated with the dominant piece $\BFs_i$ violates row $k$. If $\BFs_i$ were dual optimal at the witness, complementary slackness and Assumption~\ref{asmp:nondegenerate_policy_recovery} would make this basis decision a primal optimum, contradicting the violation. An optimal recourse-dual extreme point at the witness therefore has a strictly larger value and is not in $\mathcal{T}$. Each positive iteration adds a new member of the finite set $\mathcal{S}$.

At termination, fix $\BFxi\in\Xi$ and choose any maximizing $\BFs_i\in\mathcal{S}_{\mathrm{val}}$. Its nonpositive cell audits make the associated basis decision primal feasible. The binding rows and $\BFC^\top\BFs_i=\BFb$ equate its objective with $\BFd(\BFxi,\BFx_1)^\top\BFs_i$, so primal--dual equality proves value completeness. Post-compression deletion preserves the same stopping condition and conclusion.
\hfill \Halmos
\endproof

Under Assumption~\ref{asmp:nondegenerate_policy_recovery}, value completion also implies policy coverage. When Algorithm~\ref{alg:continuous_policy_completion} reaches value completion, the induced policy is feasible for every $\BFxi\in\Xi$ and attains the optimal recourse value. Thus, the policy-recovery conclusion of Proposition~\ref{prop:optimal_decision_rule_continuous} applies to $\mathcal{S}_{\mathrm{val}}$. The basis formula \eqref{eq:optimal_decision_rule_continuous}, with $i_{\BFx_1}^{\mathcal{S}_{\mathrm{val}}}(\BFxi)$ as the selector, gives the resulting optimal policy.

\paragraph{Illustrative example.}
Consider the one-dimensional MAD ambiguity set with $\Xi=[0,1]$, $\mu=1/2$, and $\sigma=1/4$. Suppress the first-stage decision and define
\begin{equation*}
    % \label{eq:continuous_epg_example}
    \psi(\xi)=\min_{x_2}\{x_2\mid x_2\geq q_i(\xi),\ i=1,\ldots,4\},
\end{equation*}
where
\(
    q_1(\xi)=1, q_2(\xi)=3\xi-1,
    q_3(\xi)=\xi+\frac{3}{4}, q_4(\xi)=\frac{3}{2}-3\xi.
\)
The associated dual feasible region is the simplex
\[
    \mathcal{Z}
    =
    \left\{
    \BFz\in\mathbb{R}_+^4
    \ \middle|\ 
    \sum_{i=1}^4 z_i=1
    \right\}.
\]
The four vertices are the unit vectors $\BFe_1,\ldots,\BFe_4$, and the dual objective at $\BFe_i$ equals $q_i(\xi)$. Thus, selecting a dual vertex in this example adds exactly one of the four affine functions to the generated recourse approximation.

Each piece corresponds to an extreme point $\BFe_i$ of the simplex dual region. The optimal MAD majorant is $1+\xi/2+|\xi-1/2|$ and is tight at $\xi = 0, 1/2, 1$, corresponding to extreme points $\BFe_4, \BFe_3$ and $\BFe_2$. Therefore, the three vertices $\BFe_2,\BFe_3,\BFe_4$ form an objective certificate. The fourth piece $q_1$, indexed by $\BFe_1$, is active on $[1/6,1/4]$ and is required for value completion. The objective certificate uses three pieces, whereas pointwise value completion uses all four.

The first three panels of Figure~\ref{fig:continuous_epg_example} trace EPG-O, and the final panel traces EPG-V.
\begin{figure}[htbp]
    \centering
    \includegraphics[width=0.24\textwidth]{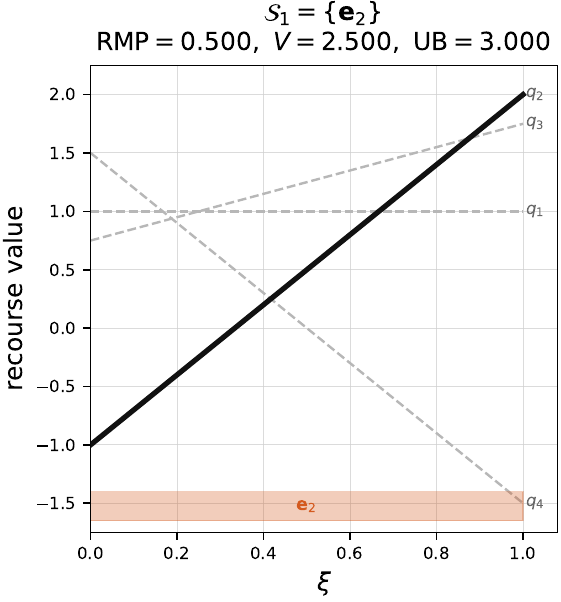}
    \includegraphics[width=0.24\textwidth]{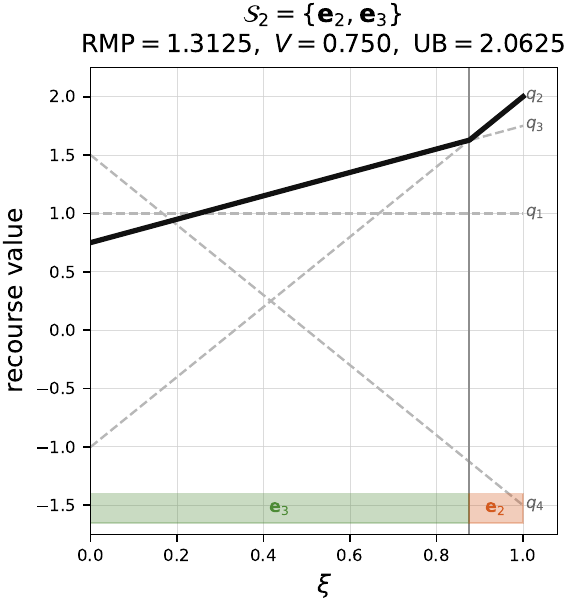}
    \includegraphics[width=0.24\textwidth]{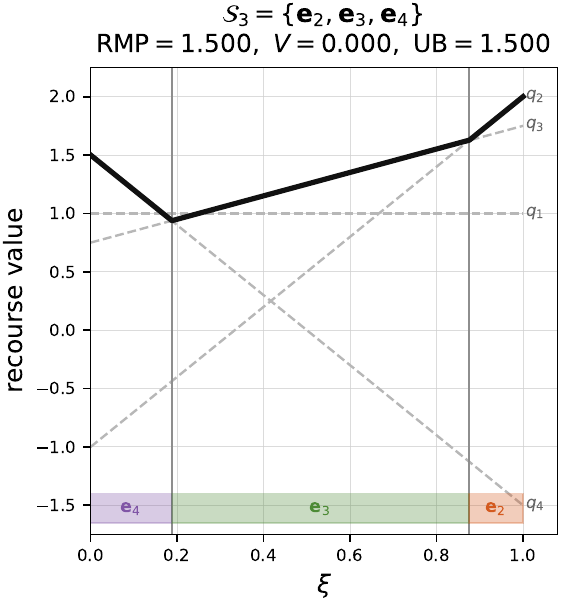}
    \includegraphics[width=0.24\textwidth]{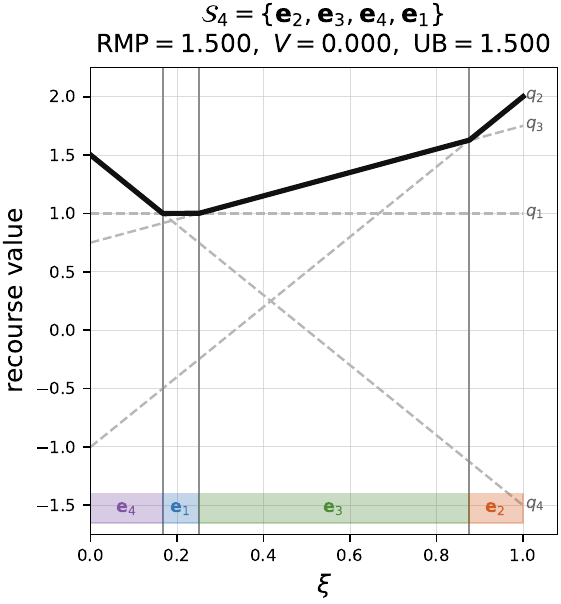}
    \caption{EPG process for the illustrative example. Gray dashed lines show all four affine value pieces, whereas the black line shows the value generated by the current extreme-point set. The colored intervals indicate the regions on which its pieces are active.}
    \label{fig:continuous_epg_example}
    \vspace{-20pt}
\end{figure}
Starting from $\{\BFe_2\}$, EPG-O adds $\BFe_3$ and then $\BFe_4$. At $\mathcal{S}_3=\{\BFe_2,\BFe_3,\BFe_4\}$, the exact objective violation is zero, and $\mathrm{RMP}_3=\mathrm{UB}_3=1.5$. EPG-V then adds $\BFe_1$, leaving the objective bounds unchanged while completing the value on $[1/6,1/4]$. Under Assumption~\ref{asmp:nondegenerate_policy_recovery}, the associated bases yield the optimal decision rule
\[
    x_2^*(\xi)=
    \begin{cases}
        \frac{3}{2}-3\xi, & 0\leq \xi\leq \frac{1}{6},\\
        1, & \frac{1}{6}\leq \xi\leq \frac{1}{4},\\
        \xi+\frac{3}{4}, & \frac{1}{4}\leq \xi\leq \frac{7}{8},\\
        3\xi-1, & \frac{7}{8}\leq \xi\leq 1.
    \end{cases}
\]

\subsection{Value Completion and Pointwise Optimality of Affine Decision Rules}
A central question in the decision-rule literature is when a prescribed decision rule is exact. Parametric linear programming shows that piecewise-affine optimizer maps are optimal but may require exponentially many pieces \citep{borrelli2003geometric,bental2020tractable}. \cite{georghiou2021optimality} provides a condition involving the partition of the uncertainty set and structural properties of the matrix coefficients that guarantees the optimality of $K$-adaptability affine decision rules.

Our framework approaches the same question from a posteriori perspective. EPG-V provides an exact benchmark for assessing a candidate decision-rule architecture. We show that $\mathcal{S}_{\mathrm{val}}$ can provide a natural optimality guarantee for $K$-adaptability affine decision rules and affine decision rules with lifted uncertainty.

\subsubsection{Optimality of $K$-adaptability Affine Decision Rules}
% ----------- Original --------------
% EPG-V identifies all extreme points that can recover the exact recourse value function. The dominance cells in equation \eqref{eq:dominance_cell_continuous} correspond to the uncertainty-set partition condition required by \cite{georghiou2021optimality}, which implies the optimality of $K$-adaptability affine decision rules.
% ----------- Revised ---------------
EPG-V identifies a sufficient set of extreme points for recovering the exact recourse value. The dominance cells in equation \eqref{eq:dominance_cell_continuous} relate to the uncertainty-set partition condition required by \cite{georghiou2021optimality}. Hence, this a posteriori certificate is complementary to the structural sufficient conditions of \cite{georghiou2021optimality}.

To state this result formally, consider the K-adaptability formulation in \cite{georghiou2021optimality}.
\begin{equation}
    \label{eq:K-adaptable_affine_decision_rule}
    \begin{aligned}
        \min_{\BFx_1, \BFx_2^k(\BFxi)}\quad
        & \BFc^\top\BFx_1+\sup_{\mathbb{P}\in\mathcal{P}}\mathbb{E}_{\mathbb{P}}[\psi(\BFx_1,\{\BFx_2^k(\BFxi)\}_{k=1}^{K}, \BFxi)]\\
        \text{s.t.}\quad & \BFA\BFx_1\geq\BFa,\quad \BFx_2^k(\BFxi): \Xi\overset{\mathrm{affine}}{\longrightarrow}\mathbb{R}^{N_2}, \quad k=1,\ldots,K,
    \end{aligned}
\end{equation}
where \(\psi(\BFx_1,\{\BFx_2^k(\BFxi)\}_{k=1}^{K}, \BFxi) = \min_{k \in [K]} \left\{ \BFb^\top\BFx_2^k(\BFxi) \mid \BFC\BFx_2^k(\BFxi)\geq \BFd(\BFxi, \BFx_1) \right\} \) denotes the second-stage value function under the $K$-adaptability affine decision rules. The following proposition gives the resulting sufficient condition for the optimality of $K$-adaptability affine decision rules.

\begin{proposition}[Optimality of $K$-adaptability affine decision rules]
    \label{prop:optimality_k_adaptable}
    Suppose Assumption~\ref{asmp:nondegenerate_policy_recovery} holds, $\BFd(\BFxi,\BFx_1)$ is affine in $\BFxi$, $\Xi$ is polyhedral, and EPG-V satisfies Theorem~\ref{thm:value_completion_continuous}. If $K \geq |\mathcal{S}_{\mathrm{val}}|$, formulation \eqref{eq:K-adaptable_affine_decision_rule} solves \eqref{eq:two_stage_reformulation_extreme_points_continuous} exactly.
\end{proposition}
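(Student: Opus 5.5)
The plan is to show two inequalities between optimal values: formulation \eqref{eq:K-adaptable_affine_decision_rule} is never better than the full master \eqref{eq:two_stage_reformulation_extreme_points_continuous}, and when $K\geq|\mathcal{S}_{\mathrm{val}}|$ it is never worse. The first direction is a restriction argument. The second builds $K$ affine rules from the bases of $\mathcal{S}_{\mathrm{val}}$, evaluated at the optimal first-stage decision $\BFx_1^*$ returned by EPG-O, and checks that the $K$-adaptable recourse value equals $\psi(\BFx_1^*,\BFxi)$ for every $\BFxi\in\Xi$.

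\textbf{Step 1 (lower bound).} Fix any feasible $\BFx_1$ and any affine rules $\{\BFx_2^k\}$. For each $\BFxi$, every $k$ with $\BFC\BFx_2^k(\BFxi)\geq\BFd(\BFxi,\BFx_1)$ is feasible in \eqref{eq:second_stage_problem_continuous}. Hence
\[
\psi(\BFx_1,\{\BFx_2^k\},\BFxi)\geq\psi(\BFx_1,\BFxi),
\]
with the convention $+\infty$ when no $k$ is feasible. Taking the worst-case expectation over $\mathcal P$ and using the exact reformulation \eqref{eq:two_stage_reformulation_extreme_points_continuous} shows that the $K$-adaptable value is at least the full-master value $v^*$.

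\textbf{Step 2 (construction).} Enumerate $\mathcal{S}_{\mathrm{val}}=\{\BFs_{i_1},\dots,\BFs_{i_m}\}$ with $m\leq K$. For $j\leq m$, set
\[
\BFx_2^j(\BFxi):=\BFC_{\mathcal{J}_{i_j}}^{-1}\BFd_{\mathcal{J}_{i_j}}(\BFxi,\BFx_1^*).
\]
This is affine in $\BFxi$ because $\BFd$ is affine. For $j>m$, repeat one of these rules. Each rule is well defined by Assumption~\ref{asmp:nondegenerate_policy_recovery}.

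\textbf{Step 3 (pointwise optimality).} This is the main step. Fix $\BFxi\in\Xi$ and let $i=i^{\mathcal{S}_{\mathrm{val}}}_{\BFx_1^*}(\BFxi)$. By Theorem~\ref{thm:value_completion_continuous}, $\BFd(\BFxi,\BFx_1^*)^\top\BFs_i=\psi(\BFx_1^*,\BFxi)$, so $\BFs_i$ is dual optimal at $\BFxi$. Complementary slackness, as in the proof of Proposition~\ref{prop:optimal_decision_rule_continuous}, then makes $\BFx_2^i(\BFxi)$ primal feasible. Its cost is
\[
\BFb^\top\BFx_2^i=\BFs_i^\top\BFC\BFx_2^i=\BFs_{i,\mathcal J_i}^\top\BFd_{\mathcal J_i}=\psi(\BFx_1^*,\BFxi).
\]
So at least one of the $K$ rules is feasible and attains $\psi(\BFx_1^*,\BFxi)$. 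The other rules may be infeasible at this $\BFxi$. That does no harm, because the inner minimum in \eqref{eq:K-adaptable_affine_decision_rule} ranges only over feasible $k$; I would state this reading of the formulation explicitly. Step 1 bounds the minimum below by $\psi$, so
\[
\psi(\BFx_1^*,\{\BFx_2^k\},\BFxi)=\psi(\BFx_1^*,\BFxi).
\]

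\textbf{Step 4 (conclusion).} The equality in Step 3 holds for every $\BFxi\in\Xi$, and every $\mathbb P\in\mathcal P$ is supported on $\Xi$. The worst-case expectations therefore coincide, and the pair $(\BFx_1^*,\{\BFx_2^k\})$ attains $v^*$, since $\BFx_1^*$ is optimal by Theorem~\ref{thm:continuous_convergence}. Together with Step 1, formulation \eqref{eq:K-adaptable_affine_decision_rule} has optimal value $v^*$ and an optimal solution built from $\mathcal{S}_{\mathrm{val}}$. This is exactness.

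The hypotheses that $\Xi$ is polyhedral and that EPG-V succeeds are what make $\mathcal{S}_{\mathrm{val}}$ finite and value-complete. The main care point is Step 3: it relies on the dual optimality of the selected piece following from value completeness at the specific $\BFx_1^*$, and on value completeness holding over all of $\Xi$, including boundaries between dominance cells. The pruning step in Algorithm~\ref{alg:continuous_policy_completion} does not threaten this, because Theorem~\ref{thm:value_completion_continuous} asserts value completeness for the returned set itself.
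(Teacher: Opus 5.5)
Your proposal is correct and follows essentially the same route as the paper: build the $m=|\mathcal{S}_{\mathrm{val}}|$ globally affine basis candidates $\BFC_{\mathcal{J}_i}^{-1}\BFd_{\mathcal{J}_i}(\BFxi,\BFx_1^*)$, use value completeness together with Assumption~\ref{asmp:nondegenerate_policy_recovery} to show that the selected candidate is feasible and attains $\psi(\BFx_1^*,\BFxi)$ at every $\BFxi\in\Xi$, and conclude that $K\geq m$ affine rules represent the unrestricted optimum. Your explicit lower-bound step (the $K$-adaptable recourse can never beat $\psi$) and your padding of the unused rules when $K>m$ only spell out what the paper's shorter proof leaves implicit.
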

\proof{Proof.}
% ----------- Original --------------
% Theorem~\ref{thm:value_completion_continuous} shows that \(
%     \max_{\BFs\in\mathcal{S}_{\mathrm{val}}}
%     \BFd(\BFxi,\BFx_1^*)^\top\BFs
%     =
%     \psi(\BFx_1^*,\BFxi),
%     \forall \BFxi\in\Xi
% \), where $\psi(\BFx_1^*,\BFxi)$ is the globally optimal recourse value function.
% Under Assumption \ref{asmp:nondegenerate_policy_recovery}, each extreme point $\BFs_i\in\mathcal{S}_{\mathrm{val}}$ has a dominance cell $\widehat{\mathcal{R}}_i(\mathcal{S}_{\mathrm{val}})$ and an associated basis $\mathcal{J}_i$. The corresponding affine basis decision
% $$
%     \BFx_{2}^*(\BFxi)
%     :=
%     \BFC_{\mathcal{J}_i}^{-1}
%     \BFd_{\mathcal{J}_i}(\BFxi,\BFx_1^*),
%     \ \text{if } \BFxi \in \widehat{\mathcal{R}}_i(\mathcal{S}_{\mathrm{val}})
% $$
% is primal optimal. Because $K \geq |\mathcal{S}_{\mathrm{val}}|$, the solution $(\BFx_1^*,\BFx_2^*(\cdot))$ is feasible under the $K$-adaptability affine decision rule \eqref{eq:K-adaptable_affine_decision_rule}. This rule therefore attains the optimal value of \eqref{eq:two_stage_reformulation_extreme_points_continuous}.
% ----------- Revised ---------------
Index $\mathcal{S}_{\mathrm{val}}=\{\BFs_1,\ldots,\BFs_m\}$ and define the globally affine candidates
\(
\BFx_{2,i}(\BFxi):=\BFC_{\mathcal{J}_i}^{-1}\BFd_{\mathcal{J}_i}(\BFxi,\BFx_1^*)
\)
for $i\in[m]$. For every $\BFxi$, choose $i=i_{\BFx_1^*}^{\mathcal{S}_{\mathrm{val}}}(\BFxi)$. Theorem~\ref{thm:value_completion_continuous} and Assumption~\ref{asmp:nondegenerate_policy_recovery} make this candidate feasible and optimal. Thus, the $m$ global affine candidates attain the unrestricted optimum. Because $K \geq |\mathcal{S}_{\mathrm{val}}|$, the globally optimal solution $(\BFx_1^*,\BFx_2^*(\cdot))$ is feasible under the $K$-adaptability affine decision rule \eqref{eq:K-adaptable_affine_decision_rule}.
\hfill\Halmos
\endproof

Proposition~\ref{prop:optimality_k_adaptable} gives an a posteriori sufficient condition for $K$-adaptability affine rules. When $|\mathcal S_{\mathrm{val}}|=1$, one affine rule is optimal. The number of pieces needed only for the exact objective may be smaller than $|\mathcal S_{\mathrm{val}}|$.

This solution-dependent certificate complements the existence of structural conditions such as those in \cite{georghiou2021optimality}. Proposition~\ref{prop:optimality_k_adaptable} also gives an upper bound on the number of pieces sufficient for exactness, complementing related bounds such as \cite{kurtz2026bounding}. 
However, the evidence in Section~\ref{sec:numerical_experiments} reveals that the number of pieces needed for value completion can substantially exceed the objective-support count.
% This gap motivates an a posteriori test of whether a prescribed architecture represents the completed policy compactly.

\subsubsection{Optimality of Lifted Decision Rules}
Lifting methods prescribe a feature map and optimize a decision rule that is linear in the lifted uncertainty \citep{georghiou2015generalized}. Let $\bm{\Phi}:\Xi\to\mathbb{R}^{N_{\mathrm{lift}}}$ be a lifting map that may include the original uncertainty and nonlinear features. A lifted decision rule has the form
\[
    \BFx_2(\BFxi)=\bm{Y}\bm{\Phi}(\BFxi),
\]
where $\bm{Y}\in\mathbb{R}^{N_2\times N_{\mathrm{lift}}}$. The value-complete set $\mathcal{S}_{\mathrm{val}}$ provides an exact pointwise target for assessing this prescribed architecture.

\begin{proposition}[A posteriori exactness of a lifted decision rule]
    \label{prop:optimality_lifted_decision_rule}
    Let $\BFx_1^*$ be certified by EPG-O, and let $\mathcal{S}_{\mathrm{val}}$ be returned by EPG-V under Theorem~\ref{thm:value_completion_continuous}. For a fixed feature map $\bm{\Phi}$, the lifted policy class contains a pointwise-optimal recourse rule if and only if there exists a matrix $\bm{Y}$ such that, for every $\BFxi\in\Xi$,
    \[
        \BFC\bm{Y}\bm{\Phi}(\BFxi)
        \geq
        \BFd(\BFxi,\BFx_1^*),
        \qquad
        \BFb^\top\bm{Y}\bm{\Phi}(\BFxi)
        =
        \max_{\BFs\in\mathcal{S}_{\mathrm{val}}}
        \BFd(\BFxi,\BFx_1^*)^\top\BFs.
    \]
    When such a matrix $\BFY$ exists, the lifted decision rule solves \eqref{eq:two_stage_reformulation_extreme_points_continuous} exactly.
\end{proposition}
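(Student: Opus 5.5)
The argument reduces to the identity $\max_{\BFs\in\mathcal{S}_{\mathrm{val}}}\BFd(\BFxi,\BFx_1^*)^\top\BFs=\psi(\BFx_1^*,\BFxi)$ for all $\BFxi\in\Xi$, supplied by Theorem~\ref{thm:value_completion_continuous}. Once this holds, the two displayed conditions say exactly that the lifted rule is primal feasible and attains the recourse value at every realization. I would prove the two directions of the equivalence separately and then deduce exactness for \eqref{eq:two_stage_reformulation_extreme_points_continuous}.

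For the ``if'' direction, fix $\BFY$ satisfying both conditions and set $\BFx_2(\BFxi)=\BFY\bm{\Phi}(\BFxi)$. The first inequality is the second-stage constraint in \eqref{eq:second_stage_problem_continuous} at $(\BFx_1^*,\BFxi)$, so $\BFx_2(\BFxi)$ is feasible. By value completion, the second equality gives $\BFb^\top\BFx_2(\BFxi)=\psi(\BFx_1^*,\BFxi)$, so $\BFx_2(\BFxi)$ attains the minimum and is pointwise optimal.

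For the ``only if'' direction, suppose some $\BFY$ makes $\BFY\bm{\Phi}(\cdot)$ pointwise optimal. Pointwise feasibility is exactly the first condition. Pointwise optimality gives $\BFb^\top\BFY\bm{\Phi}(\BFxi)=\psi(\BFx_1^*,\BFxi)$, and value completion converts the right-hand side into the maximum over $\mathcal{S}_{\mathrm{val}}$. Here $\psi(\BFx_1^*,\BFxi)$ is finite, because $\BFx_1^*\in\mathcal{X}_1$ guarantees feasibility and $\mathcal{Z}\neq\emptyset$ guarantees boundedness. I would point out that this direction uses only the fact that $\mathcal{S}_{\mathrm{val}}$ is value-complete, not Assumption~\ref{asmp:nondegenerate_policy_recovery}.

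For exactness, I would argue as follows. $\BFx_1^*$ is optimal for \eqref{eq:two_stage_reformulation_extreme_points_continuous} by Theorem~\ref{thm:continuous_convergence}. Pairing it with the lifted rule gives the realized recourse cost $\BFb^\top\BFx_2(\BFxi)=\psi(\BFx_1^*,\BFxi)$ for every $\BFxi\in\Xi$. Hence, for every $\mathbb{P}\in\mathcal{P}$, the expectation, and therefore the worst-case expectation, coincides with that of the unrestricted recourse. The lifted-rule formulation is a restriction of the original problem, so its value is at least the optimum. It is also attained at $(\BFx_1^*,\BFY\bm{\Phi})$, so the two values are equal.

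The main point needing care is the measure-theoretic step: the worst-case expectation of the lifted recourse must equal $\sup_{\mathbb{P}}\mathbb{E}_{\mathbb{P}}[\psi(\BFx_1^*,\BFxi)]$. Every $\mathbb{P}\in\mathcal{P}$ is supported on $\Xi$, and the two integrands agree on all of $\Xi$, not merely almost surely, so I expect this step to be straightforward. Equivalence of \eqref{eq:two_stage_reformulation_extreme_points_continuous} with \eqref{eq:first_stage_problem_continuous} through the epigraph representation is taken as given.
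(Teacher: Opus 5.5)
Your proposal is correct and follows essentially the same argument as the paper. You use the value-completion identity from Theorem~\ref{thm:value_completion_continuous} to read the two conditions as pointwise primal feasibility plus attainment of $\psi(\BFx_1^*,\BFxi)$, derive the converse directly, and then combine this with the EPG-O optimality of $\BFx_1^*$; your remarks on the finiteness of $\psi$ and on expectations over measures supported on $\Xi$ just make explicit steps the paper leaves implicit.
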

\proof{Proof.}
% ----------- Original --------------
% By Theorem~\ref{thm:value_completion_continuous}, the maximum over $\mathcal{S}_{\mathrm{val}}$ equals $\psi(\BFx_1^*,\BFxi)$ for every $\BFxi\in\Xi$. The first condition makes $\bm{Y}\bm{\Phi}(\BFxi)$ primal feasible, while the second makes its cost equal to the optimal recourse value. Hence, primal--dual equality proves pointwise optimality. Together with the EPG-O certificate for $\BFx_1^*$, this policy attains the unrestricted two-stage DRO optimum. Conversely, every pointwise-optimal lifted rule is primal feasible and has cost $\psi(\BFx_1^*,\BFxi)$ at every realization, so both conditions are necessary.
% ----------- Revised ---------------
By Theorem~\ref{thm:value_completion_continuous}, the maximum over $\mathcal{S}_{\mathrm{val}}$ equals $\psi(\BFx_1^*,\BFxi)$ for every $\BFxi\in\Xi$. The first condition makes $\bm{Y}\bm{\Phi}(\BFxi)$ primal feasible, and the second makes its cost equal to the optimal recourse value. Primal--dual equality therefore proves pointwise optimality. Together with the EPG-O certificate for $\BFx_1^*$, this policy attains the unrestricted two-stage DRO optimum. Conversely, every pointwise-optimal lifted rule is primal feasible and has cost $\psi(\BFx_1^*,\BFxi)$ at every realization, so both conditions are necessary.
\hfill\Halmos
\endproof

Proposition~\ref{prop:optimality_lifted_decision_rule} gives an a posteriori exactness characterization and can guide lifting design. The feature map must represent the piecewise structure. For example, the hinge map $\Phi(\xi)=\left(1, \xi, \left[\xi - \frac{1}{6}\right]_+,\left[\xi - \frac{1}{4}\right]_+,\left[\xi - \frac{7}{8}\right]_+\right)^\top$ represents the optimal rule in Figure~\ref{fig:continuous_epg_example}, but finding the turning points ex ante remains difficult.

\section{Extensions}
We briefly discuss two extensions. The first addresses dual degeneracy and rank-deficient recourse, and the second extends the current fixed-recourse framework to random recourse.

\subsection{Dual Degeneracy and Rank-Deficient Recourse}
\label{sec:degenerate_policy_recovery}
Assumption~\ref{asmp:nondegenerate_policy_recovery} ensures that each extreme point corresponds to a unique, well-defined policy piece. Nondegeneracy gives each extreme point a unique compatible basis, whereas full column rank ensures that the inverse of the basis matrix exists. If either condition fails, reconstructing a primal policy from the dual extreme points may not be unique or well-defined. We now discuss how to handle dual degeneracy and rank-deficient recourse.

\paragraph{Full-rank but degenerate extreme points.}
Suppose $\operatorname{rank}(\BFC)=N_2$, and define $\mathcal{K}_i=\operatorname{supp}(\BFs_i)$. If $|\mathcal{K}_i|<N_2$, then the extreme point $\BFs_i$ is degenerate. Complete $\mathcal{K}_i$ to a row index set $\mathcal{J}$ belonging to
\[
    \mathfrak{J}_i(\BFxi; \BFx_1) = \left\{\mathcal{J} \subseteq [M] \mid
        \mathcal{K}_i\subseteq \mathcal{J},
        |\mathcal{J}|=N_2,
        \det(\BFC_{\mathcal{J}})\neq0,
        \BFC\BFC_{\mathcal{J}}^{-1}\BFd_{\mathcal{J}}(\BFxi,\BFx_1)\geq\BFd(\BFxi,\BFx_1)
    \right\}.
\]
The set $\mathfrak{J}_i(\BFxi; \BFx_1)$ defines the index sets such that $\BFC_{\mathcal{J}}$ is nonsingular and the associated basis decision is feasible. Let $\mathcal{J}^c = [M] \setminus \mathcal{J}$ denote the complement of $\mathcal{J}$ in $[M]$, so $(\BFs_i)_{\mathcal{J}^c}=\BFzero$. The following equations then hold.
\[
    \BFC_{\mathcal{J}}^\top(\BFs_i)_{\mathcal{J}}
    =
    \BFC^\top\BFs_i
    =
    \BFb,
    \qquad
    \BFd^\top(\BFxi,\BFx_1)\BFs_i = \BFd_{\mathcal{J}}^\top(\BFxi,\BFx_1)(\BFs_i)_{\mathcal{J}} = \BFb^\top\BFC_{\mathcal{J}}^{-1}\BFd_{\mathcal{J}}(\BFxi,\BFx_1).
\]
Hence, any $\mathcal{J}\in\mathfrak{J}_i(\BFxi;\BFx_1)$ gives a compatible decision when $\BFs_i$ is dual optimal. Because the basis may vary with $\BFxi$, this is a pointwise reconstruction. An explicit finite policy requires refining each dominance cell into regions with a compatible basis.

\paragraph{Rank-deficient recourse.}
We next consider the case in which $\BFC$ is rank-deficient. Let $r=\operatorname{rank}(\BFC)<N_2$, and choose $\BFU\in\mathbb{R}^{N_2\times r}$ with orthonormal columns that span $\operatorname{range}(\BFC^\top)$. Feasibility and finiteness of the recourse problem imply that $\BFb\in\operatorname{range}(\BFC^\top)$. Set
\[
    \overline{\BFC}=\BFC\BFU,
    \qquad
    \overline{\BFb}=\BFU^\top\BFb.
\]
Apply the change of variables $\BFx_2=\BFU\BFy+\BFv$, where $\BFv\in\ker(\BFC)$. As $\BFy$ ranges over $\mathbb{R}^r$, $\BFU \BFy$ spans $\operatorname{range}(\BFC^\top)$, so $\BFU\BFy+\BFv$ spans the entire space $\mathbb{R}^{N_2}$. Because $\BFC\BFv=\BFzero$ and $\BFb^\top\BFv=0$, the recourse problem has the following equivalent reduced-coordinate representation.
\[
    \min_{\BFy\in\mathbb{R}^r}
    \left\{
    \overline{\BFb}^{\top}\BFy:
    \overline{\BFC}\BFy\geq\BFd(\BFxi,\BFx_1)
    \right\} = \min_{\BFx_2\in\mathbb{R}^{N_2}}
    \left\{
    \BFb^{\top}\BFx_2:
    \BFC\BFx_2\geq\BFd(\BFxi,\BFx_1)
    \right\}.
\]
Because $\operatorname{rank}(\overline{\BFC}) = r$, the matrix $\overline{\BFC}$ has full column rank, and the preceding analysis applies directly.

Applying the preceding basis reconstruction in the reduced coordinates yields $r$ rows indexed by $\mathcal{J}$ for which $\BFC_{\mathcal{J}}\BFU$ is nonsingular. The corresponding reduced decision is $\overline{\BFC}_{\mathcal{J}}^{-1}\BFd_{\mathcal{J}}$. Choosing the canonical element $\BFv=\BFzero$ of the kernel of $\BFC$ yields
\[
    \BFx_2
    =
    \BFU\overline{\BFC}_{\mathcal{J}}^{-1}\BFd_{\mathcal{J}}(\BFxi,\BFx_1).
\]
Thus, the dual value representation remains valid after rank reduction. Applying the same analysis to the reduced-coordinate problem gives a finite set of extreme points and a corresponding decision policy.

\subsection{Random Recourse and Nonlinear Pieces}
\label{sec:structured_random_recourse}
The fixed-recourse theory relies on a dual feasible region that is independent of the realization. Under general random recourse, the primal and dual problems are
\[
    \begin{aligned}
    \psi(\BFx_1,\BFxi)
    &=\min_{\BFx_2}\left\{
    \BFb(\BFxi)^\top\BFx_2:
    \BFC(\BFxi)\BFx_2\geq\BFd(\BFxi,\BFx_1)
    \right\},\\
    D_\psi(\BFx_1,\BFxi)
    &=\max_{\BFz\in\mathcal{Z}(\BFxi)}
    \BFd(\BFxi,\BFx_1)^\top\BFz,
    \qquad
    \mathcal{Z}(\BFxi)
    :=\{\BFz\geq\BFzero:\BFC(\BFxi)^\top\BFz=\BFb(\BFxi)\}.
    \end{aligned}
\]
For each $\BFx_1$ under consideration and every $\BFxi\in\Xi$, suppose $\mathcal{Z}(\BFxi)$ is nonempty, $\BFC(\BFxi)$ has full column rank, relatively complete recourse holds, and the recourse problem attains a finite value.

Although the dual region $\mathcal{Z}(\BFxi)$ varies with $\BFxi$, its candidate row bases belong to the fixed finite family
\[
    \mathfrak J:=\{\mathcal J\subseteq[M]:|\mathcal J|=N_2\}.
\]
% ----------- Original --------------
% For $\mathcal J\in\mathfrak J$, let $\mathcal J^c=[M]\setminus\mathcal J$. Then, the extreme point associated with $\mathcal J$ is $\BFs_{\mathcal J}(\BFxi)$ with 
% ----------- Revised ---------------
For $\mathcal J\in\mathfrak J$, let $\mathcal J^c=[M]\setminus\mathcal J$. The extreme point associated with $\mathcal J$ is $\BFs_{\mathcal J}(\BFxi)$, where
\(
    (\BFs_{\mathcal J}(\BFxi))_{\mathcal J}
    =(\BFC_{\mathcal J}(\BFxi)^\top)^{-1}\BFb(\BFxi)
    \text{ and } 
    (\BFs_{\mathcal J}(\BFxi))_{\mathcal J^c}:=\BFzero.
\)
The feasibility domain of basis $\mathcal J$ is
\[
    \mathcal D_{\mathcal J}
    :=
    \left\{\BFxi\in\Xi:
    \det(\BFC_{\mathcal J}(\BFxi))\neq0,\ 
    (\BFC_{\mathcal J}(\BFxi)^\top)^{-1}\BFb(\BFxi)\geq\BFzero
    \right\}
\]
The dual problem admits an optimal extreme-point solution, and full column rank ensures that one candidate basis represents this extreme point. The recourse value can be expressed as the maximum over the locally valid bases.
\[
    \psi(\BFx_1,\BFxi)
    =
    \max_{\mathcal J\in\mathfrak J:\,\BFxi\in\mathcal D_{\mathcal J}}
    \BFd(\BFxi,\BFx_1)^\top\BFs_{\mathcal J}(\BFxi).
\]
Each locally valid basis $\mathcal{J}$ also yields the candidate decision
\[
    \BFx_2^{\mathcal J}(\BFxi)
    :=
    \BFC_{\mathcal J}(\BFxi)^{-1}
    \BFd_{\mathcal J}(\BFxi,\BFx_1).
\]
This finite-basis formula is only a structural representation, but it indicates the optimality of the nonlinear recourse value function and the decision-rule structure. Extending the EPG method to nonlinear recourse is not straightforward, because it requires solvable nonlinear master and subproblems over the moving basis values and their local domains. We next consider a special structure that restores a realization-independent dual region.

\paragraph{Change-of-variables recourse.}
Consider the structured case
\(
    \BFC(\BFxi)=\BFC^0\BFT(\BFxi),
    \BFb(\BFxi)=\BFT(\BFxi)^\top\BFb^0,
\)
where $\BFT(\BFxi)$ is nonsingular. The dual equality then reduces to
\[
    \BFC(\BFxi)^\top\BFz=\BFb(\BFxi)
    \quad\Longleftrightarrow\quad
    \BFC^{0\top}\BFz=\BFb^0,
\]
so the dual region $\mathcal Z^0:=\{\BFz\geq\BFzero:\BFC^{0\top}\BFz=\BFb^0\}$ is fixed. For each compatible row-index set $\mathcal J$ of $\BFC^0$, the candidate decision becomes
\[
    \BFx_2^{\mathcal J}(\BFxi)
    =
    \BFT(\BFxi)^{-1}(\BFC_{\mathcal J}^0)^{-1}
    \BFd_{\mathcal J}(\BFxi,\BFx_1).
\]
The fixed dual value representation is therefore recovered, although $\BFT(\BFxi)^{-1}$ can make the primal decision nonlinear. Hence, this structure permits value-oriented EPG.

\section{Numerical Experiments}
\label{sec:numerical_experiments}
We study a continuous fixed-depot relief-supply prepositioning problem adapted from the two-stage emergency-supply setting in \cite{rawls2010prepositioning}. Let $\mathcal{I}$ denote the set of fixed depots, $\mathcal{J}$ the set of demand zones, and $\mathcal{A}\subseteq\mathcal{I}\times\mathcal{J}$ the set of shipment arcs. Before demand is observed, $x_i$ units are reserved at depot $i$. After observing the zone-demand vector $\bm{\xi}$, $q_{ij}$ denotes the shipment on arc $(i,j)$, $e_i$ the emergency procurement at depot $i$, and $s_j$ the outsourced coverage in zone $j$. The model is
\begin{equation}
    \label{eq:relief_supply_dro}
    \begin{aligned}
    \min_{\bm{x}}\quad
        &\sum_{i\in\mathcal{I}}c_i x_i
        +\sup_{\mathbb{P}\in\mathcal{P}}\mathbb{E}_{\mathbb{P}}\left[Q(\bm{x},\bm{\xi})\right]\\
    \text{s.t.}\quad
        &0\leq x_i\leq\bar{x}_i,
        &&i\in\mathcal{I},\\
        &\sum_{i\in\mathcal{I}}x_i\leq B.
    \end{aligned}
\end{equation}
where
\begin{equation}
    \label{eq:relief_supply_recourse}
    \begin{aligned}
    Q(\bm{x},\bm{\xi})=\min_{\bm{q},\bm{e},\bm{s} \geq \BFzero}\quad
        &\sum_{(i,j)\in\mathcal{A}}\tau_{ij}q_{ij}
        +\sum_{i\in\mathcal{I}}g_i e_i
        +\sum_{j\in\mathcal{J}}p_j s_j\\
    \text{s.t.}\quad
        &\sum_{i:(i,j)\in\mathcal{A}}q_{ij}+s_j\geq \xi_j,
        &&j\in\mathcal{J},\\
        &\sum_{j:(i,j)\in\mathcal{A}}q_{ij}\leq x_i+e_i,
        &&i\in\mathcal{I}.
        % &q_{ij}\geq0,
        % &&(i,j)\in\mathcal{A},\\
        % &e_i\geq0,
        % &&i\in\mathcal{I},\\
        % &s_j\geq0,
        % &&j\in\mathcal{J}.
    \end{aligned}
\end{equation}
The demand-coverage and depot-flow constraints retain the fixed-recourse structure, whereas emergency procurement and outsourcing provide complete recourse. Appendix~\ref{app:relief_supply_details} presents the full details.

Medium and Large are structured synthetic spatial networks. Medium has four depots and six zones, whereas Large has five depots and eight zones. Daskin aggregates the official location data for the 48 contiguous US state capitals and Washington, DC into six zones and selects four fixed depot representatives \citep{daskin1997network}. Its geography and 1990 population weights are source-based, whereas its costs and demand uncertainty are calibrated for this study. Each family uses ten paired runs with 50 training samples per run. All methods use one thread and a 1,200-second limit. The comparison uses a validated relative-gap target of 0.01 percent, and other details are deferred in Appendix~\ref{app:relief_supply_details}.

We compare five methods. EPG implements Algorithm~\ref{alg:continuous_iterative}. C\&CG adds scenario-indexed recourse copies following \cite{zeng2013columnconstraint}. Scenario-dual CP maintains scenario-indexed recourse-dual information in the spirit of \cite{duque2022distributionally}. MAD Full Vertex enumerates the vertices of the MAD sign-partition cells and solves the corresponding exact extensive formulation \citep{postek2018robust}. ADR is a standard affine-policy benchmark \citep{ben2004adjustable,kuhn2011primal,bertsimas2019adaptive}.

\begin{table}[htbp]
\centering
\caption{Relief-supply MAD-DRO performance over ten paired realizations}
\label{tab:relief_supply_numerics}
\resizebox{\textwidth}{!}{
\begin{tabular}{lccccccccc}
\hline
& \multicolumn{3}{c}{Medium} & \multicolumn{3}{c}{Daskin} & \multicolumn{3}{c}{Large}\\
\cline{2-4}\cline{5-7}\cline{8-10}
Algorithm
& \begin{tabular}[c]{@{}c@{}}$5\%$ gap reached,\\ median time (s)\end{tabular}
& \begin{tabular}[c]{@{}c@{}}$0.01\%$ gap reached,\\ median time (s)\end{tabular}
& \begin{tabular}[c]{@{}c@{}}Median gap (\%)\\120s / final\end{tabular}
& \begin{tabular}[c]{@{}c@{}}$5\%$ gap reached,\\ median time (s)\end{tabular}
& \begin{tabular}[c]{@{}c@{}}$0.01\%$ gap reached,\\ median time (s)\end{tabular}
& \begin{tabular}[c]{@{}c@{}}Median gap (\%)\\120s / final\end{tabular}
& \begin{tabular}[c]{@{}c@{}}$5\%$ gap reached,\\ median time (s)\end{tabular}
& \begin{tabular}[c]{@{}c@{}}$0.01\%$ gap reached,\\ median time (s)\end{tabular}
& \begin{tabular}[c]{@{}c@{}}Median gap (\%)\\120s / final\end{tabular}\\
\hline
EPG & $10/10,\,22.2$ & $8/10,\,19.0$ & $0.010/0.010$
         & $10/10,\,3.76$ & $9/10,\,76.6$ & $0.010/0.010$
         & $10/10,\,14.1$ & $6/10,\,135.2$ & $0.156/0.010$\\
C\&CG & $7/10,\,55.3$ & $2/10,\,559.3$ & $8.258/1.750$
      & $8/10,\,402.3$ & $1/10,\,1070.8$ & $8.949/0.853$
      & $3/10,\,604.2$ & $0/10,\,\text{--}$ & $9.736/7.356$\\
Scenario-dual CP & $7/10,\,248.6$ & $1/10,\,666.6$ & $9.390/1.626$
                 & $8/10,\,600.2$ & $1/10,\,629.7$ & $7.792/1.981$
                 & $3/10,\,130.9$ & $0/10,\,\text{--}$ & $9.736/7.008$\\
MAD Full Vertex & \multicolumn{2}{c}{$1.73$} & $\text{--} / 0$
             & \multicolumn{2}{c}{$2.25$} & $\text{--} / 0$
             & \multicolumn{2}{c}{$27.67$} & $\text{--} / 0$\\
ADR & \multicolumn{2}{c}{$0.058$} & $\text{--} / 10.52$
    & \multicolumn{2}{c}{$0.065$} & $\text{--} / 17.46$
    & \multicolumn{2}{c}{$0.083$} & $\text{--} / 10.02$\\
\hline
\end{tabular}}
\begin{minipage}{\textwidth}
\footnotesize For the iterative methods (EPG, C\&CG, and Scenario-dual CP), each reaching-time cell reports the number of runs that reach the gap and the corresponding conditional median time; unreached runs are not counted. The medians for the 120-second and final gaps use all ten runs. The MAD Full Vertex and ADR cells report median total times. ADR's final column reports its ex-post objective gap relative to the strict exact reference.
\end{minipage}
\end{table}

The results are shown in Table~\ref{tab:relief_supply_numerics}.    
Among the iterative methods, EPG has the lowest reported conditional median target time and the highest or tied-highest attainment count for every family and target. A key observed difference between EPG and the methods that iterate over uncertainty scenarios is that EPG usually requires fewer iterations. One possible explanation is that its fixed dual vertices remain reusable as the first-stage decision changes, whereas generated worst-case scenarios may become inactive because they vary with the first-stage decision more significantly. 

ADR has the shortest runtimes, but its median ex-post objective gaps are 10.52, 17.46, and 10.02 percent. MAD Full Vertex solves the largest reported instance in 27.67 seconds, but its vertex enumeration grows exponentially and therefore requires substantial memory as problem size increases. In the controlled comparison in Appendix~\ref{app:synthetic_scalability}, EPG remains viable and returns a solution with a small gap at larger reported uncertainty dimensions, whereas MAD Full Vertex reaches the memory or time limit.

\paragraph{Value completion.}
Among the 23 Medium, Large, and Daskin cases in which EPG-O reaches the $0.01\%$ gap, the median completion times for Medium, Large, and Daskin are 3.8, 57.6, and 39.1 seconds. The mean effective objective-support and value-completion counts are 8.4 and 74.9. The largest value-completion output occurs for a Large case, with 10 objective-support points and 223 value pieces. These counts show that value completion may retain many more pieces than objective certification. Appendix~\ref{app:relief_supply_details} reports more detailed results.
 
\section{Conclusion}
This paper connects iterative generation and decision rules through the dual envelope of fixed recourse. Dual extreme points define value pieces, and compatible primal bases can turn completed value information into policy pieces. An objective-support set can require fewer pieces than reproducing the recourse value at every realization. In the reported cases meeting the objective-gap target, the effective value-completion sets were larger on average than the effective objective-support sets. A completed value and policy representation may support repeated deployment or an assessment of whether a prescribed decision-rule architecture is sufficiently compact.

The violation subproblem is the main computational bottleneck in the current EPG implementation. 
In the current extensions, we only briefly discuss the random-recourse case and consider a special change-of-variables structure that preserves a fixed dual region. Future work can develop scalable value completion and basis refinement for the random-recourse case, improve exact separation, and evaluate the operational break-even point between offline completion and repeated online recourse optimization.
\clearpage
\bibliographystyle{informs2014}
\bibliography{reference}

\clearpage

\clearpage
\ECSwitch
\renewcommand{\theHequation}{EC.\arabic{equation}}
\renewcommand{\theHtable}{EC.\arabic{table}}
\renewcommand{\theHfigure}{EC.\arabic{figure}}

\begin{APPENDIX}{}
\renewcommand{\theHsection}{EC.\arabic{section}}

\section{Relief-Supply Experimental Details}
\label{app:relief_supply_details}

\paragraph{Formulation and calibration.}
For each zone $j$, define $\mathcal{A}_j=\{i:(i,j)\in\mathcal{A}\}$, and for each depot $i$, define $\mathcal{A}_i=\{j:(i,j)\in\mathcal{A}\}$. Equations~\eqref{eq:relief_supply_dro}--\eqref{eq:relief_supply_recourse}, together with
\begin{equation}
    \label{eq:relief_supply_ambiguity}
    \mathcal{P}=
    \left\{
    \mathbb{P}:
    \mathbb{P}(\bm{\xi}\in\Xi)=1,
    \mathbb{E}_{\mathbb{P}}[\bm{\xi}]=\bm{\mu},
    \mathbb{E}_{\mathbb{P}}[|\bm{\xi}-\bm{\mu}|]\leq\bm{\sigma}
    \right\},
\end{equation}
define the relief model. Let $\bm{d}$ denote base demand and $D=\sum_j d_j$. The polyhedral support is
\begin{equation}
    \label{eq:relief_supply_support}
    \Xi=
    \left\{
    \bm{\xi}:
    0.4d_j\leq\xi_j\leq1.6d_j,\ j\in\mathcal{J},\
    0.75D\leq\sum_{j\in\mathcal{J}}\xi_j\leq1.25D
    \right\}.
\end{equation}
The depot bounds are $\bar{x}_i=0.6\sum_j\mu_j$, and the total prepositioning limit is $B=0.8\sum_j\mu_j$. For each of the 50 observations, independent common and zone-specific Beta$(2,2)$ draws $U_0$ and $U_j$ first generate $d_j(0.5+0.6U_0+0.4U_j)$. The resulting vector is then projected onto \eqref{eq:relief_supply_support}. We set $\bm{\mu}$ to the sample mean and $\bm{\sigma}$ to the componentwise sample MAD.

The transport cost on an active arc is $\tau_{ij}=0.5+4\widetilde{d}_{ij}$, where $\widetilde{d}_{ij}$ denotes distance normalized by the largest depot--zone distance in the instance. Reservation costs repeat $(1.0,1.1,1.2)$ across depots, emergency-procurement costs repeat $(7.0,7.5,8.0)$, and outsourcing costs repeat $(9.0,9.75,10.5,11.25)$ across zones. These calibrated costs make emergency procurement and outsourcing more expensive than planned inventory and regular shipment. They also preserve complete recourse without attributing empirical cost data to the geographic source.

\paragraph{Networks and dimensions.}
For the structured networks, we draw depot and zone coordinates uniformly on the unit square, draw base demands uniformly between 10 and 20, connect each zone to its nearest depots, and repair any unused depot by adding its nearest zone. Small has 3 depots, 4 zones, 8 arcs, 15 recourse variables, and 22 recourse constraints. For the same five dimensions, Medium has $(4,6,18,28,38)$, Daskin also has $(4,6,18,28,38)$, and Large has $(5,8,24,37,50)$. Each family uses ten paired replicate runs, with common instances and training samples across methods.

The Daskin benchmark starts from the official LOCATION49 file, which contains 49 capital-city records and 1990 state-population demand weights, which can be founded in the online appendix of \cite{daskin1997network}. A population-weighted farthest-first rule selects six zone representatives. Each record is assigned to its nearest representative by great-circle distance, and an exhaustive discrete $p$-median calculation selects four fixed depots. Base demand is proportional to aggregated population, whereas costs and uncertainty follow the calibration above. This construction provides source-based geography and population but does not reproduce a disaster instance from \cite{rawls2010prepositioning}.

\paragraph{Method settings and reference construction.}
All computations were performed on a PC equipped with an AMD Ryzen 9 9950X processor, and the algorithms used Gurobi Optimizer 13.0.0 to solve the linear and bilinear programs. All runs use one solver thread and a 1,200-second solver limit. Exact bilinear separation uses the required nonconvex solver option, and the feasibility and optimality tolerances are $10^{-8}$. The iterative exact methods permit at most 1,000 master solves and terminate when completed validation yields a relative gap of at most $10^{-4}$, which is 0.01 percent. MAD Full Vertex enumerates the vertices of every MAD sign cell intersected with \eqref{eq:relief_supply_support}; its total time includes enumeration, formulation construction, solution, and an independent feasibility and objective audit. ADR is solved and audited within the affine-policy class, and its reported ex-post gap compares the audited affine objective with the strict exact reference.

\begin{table}[htbp]
\centering
\caption{Detailed relief-supply computational results}
\label{tab:relief_supply_numerics_detailed}
\scriptsize
\setlength{\tabcolsep}{2pt}
\resizebox{\textwidth}{!}{
\begin{tabular}{llrrrrrrr}
\hline
Family & Method & Strict & \multicolumn{2}{c}{5\% target} & \multicolumn{2}{c}{0.01\% target} & \makecell{Final\\gap} & Runtime \\
 & & runs /10 & reach /10 & time [IQR] & reach /10 & time [IQR] & median [IQR] & median [IQR] \\
\hline
Small & EPG & 9/10 & 10/10 & \makecell[r]{0.392463\\{[0.340192, 0.423352]}} & 10/10 & \makecell[r]{0.471509\\{[0.404332, 0.580018]}} & \makecell[r]{6.34006e-14\\{[4.55457e-14, 1.10975e-13]}} & \makecell[r]{0.47165\\{[0.40445, 0.580132]}} \\
Small & C\&CG & 9/10 & 10/10 & \makecell[r]{0.774941\\{[0.588911, 0.809753]}} & 10/10 & \makecell[r]{1.2016\\{[1.06277, 1.43755]}} & \makecell[r]{6.04687e-14\\{[4.4834e-14, 6.55548e-14]}} & \makecell[r]{1.20169\\{[1.06284, 1.43764]}} \\
Small & \makecell[l]{Scenario-dual\\CP} & 8/10 & 10/10 & \makecell[r]{0.816912\\{[0.754787, 1.00074]}} & 10/10 & \makecell[r]{1.60748\\{[1.51209, 1.76358]}} & \makecell[r]{6.49127e-14\\{[6.13305e-14, 1.05908e-13]}} & \makecell[r]{1.60759\\{[1.51222, 1.76373]}} \\
Small & MAD Full Vertex & 10/10 & 10/10 & \makecell[r]{0.208977\\{[0.185528, 0.238943]}} & 10/10 & \makecell[r]{0.208977\\{[0.185528, 0.238943]}} & \makecell[r]{0\\{[0, 0]}} & \makecell[r]{0.208996\\{[0.185548, 0.238961]}} \\
Small & ADR & -- & -- & -- & -- & -- & \makecell[r]{7.47492\\{[6.88763, 8.94389]}} & \makecell[r]{0.0475422\\{[0.0432791, 0.0507781]}} \\
\hline
Medium & EPG & 0/10 & 10/10 & \makecell[r]{22.2458\\{[10.0054, 103.006]}} & 8/10 & \makecell[r]{18.9517\\{[7.5011, 24.2951]}} & \makecell[r]{0.00999964\\{[0.00999905, 0.00999994]}} & \makecell[r]{22.2459\\{[10.0056, 475.432]}} \\
Medium & C\&CG & 0/10 & 7/10 & \makecell[r]{55.3271\\{[2.88691, 326.911]}} & 2/10 & \makecell[r]{559.347\\{[284.982, 833.711]}} & \makecell[r]{1.74982\\{[0.0889868, 5.55162]}} & \makecell[r]{1201.33\\{[1200.7, 1204.44]}} \\
Medium & \makecell[l]{Scenario-dual\\CP} & 0/10 & 7/10 & \makecell[r]{248.608\\{[4.78944, 528.427]}} & 1/10 & \makecell[r]{666.554\\{[666.554, 666.554]}} & \makecell[r]{1.62617\\{[0.333118, 6.05108]}} & \makecell[r]{1202.72\\{[1201.3, 1206.74]}} \\
Medium & MAD Full Vertex & 10/10 & 10/10 & \makecell[r]{1.72885\\{[1.61221, 2.15304]}} & 10/10 & \makecell[r]{1.72885\\{[1.61221, 2.15304]}} & \makecell[r]{0\\{[0, 0]}} & \makecell[r]{1.72887\\{[1.61223, 2.15306]}} \\
Medium & ADR & -- & -- & -- & -- & -- & \makecell[r]{10.5188\\{[6.4469, 12.3685]}} & \makecell[r]{0.0577616\\{[0.0569037, 0.0699418]}} \\
\hline
Daskin & EPG & 0/10 & 10/10 & \makecell[r]{3.76136\\{[2.65778, 21.4983]}} & 9/10 & \makecell[r]{76.56\\{[45.3474, 183.274]}} & \makecell[r]{0.00999985\\{[0.00999934, 0.00999992]}} & \makecell[r]{85.9628\\{[52.2516, 251.465]}} \\
Daskin & C\&CG & 0/10 & 8/10 & \makecell[r]{402.294\\{[202.881, 792.946]}} & 1/10 & \makecell[r]{1070.81\\{[1070.81, 1070.81]}} & \makecell[r]{0.85283\\{[0.476516, 1.62343]}} & \makecell[r]{1202.27\\{[1200.6, 1204.01]}} \\
Daskin & \makecell[l]{Scenario-dual\\CP} & 0/10 & 8/10 & \makecell[r]{600.181\\{[59.532, 1201.22]}} & 1/10 & \makecell[r]{629.742\\{[629.742, 629.742]}} & \makecell[r]{1.98117\\{[0.596563, 3.29766]}} & \makecell[r]{1201.85\\{[1200.89, 1203.54]}} \\
Daskin & MAD Full Vertex & 10/10 & 10/10 & \makecell[r]{2.25047\\{[2.21054, 2.43888]}} & 10/10 & \makecell[r]{2.25047\\{[2.21054, 2.43888]}} & \makecell[r]{0\\{[0, 1.9839e-14]}} & \makecell[r]{2.25049\\{[2.21057, 2.4389]}} \\
Daskin & ADR & -- & -- & -- & -- & -- & \makecell[r]{17.4586\\{[17.3961, 17.6763]}} & \makecell[r]{0.0645252\\{[0.0607032, 0.0702857]}} \\
\hline
Large & EPG & 0/10 & 10/10 & \makecell[r]{14.1105\\{[7.24726, 760.327]}} & 6/10 & \makecell[r]{135.234\\{[67.0375, 231.282]}} & \makecell[r]{0.01\\{[0.00999992, 0.0388996]}} & \makecell[r]{630.333\\{[119.426, 1201.9]}} \\
Large & C\&CG & 0/10 & 3/10 & \makecell[r]{604.249\\{[314.908, 633.976]}} & 0/10 & {} & \makecell[r]{7.35636\\{[2.41949, 8.68756]}} & \makecell[r]{1202.07\\{[1201.81, 1203.08]}} \\
Large & \makecell[l]{Scenario-dual\\CP} & 0/10 & 3/10 & \makecell[r]{130.911\\{[77.5947, 217.454]}} & 0/10 & {} & \makecell[r]{7.00778\\{[4.27014, 8.35044]}} & \makecell[r]{1202.28\\{[1201.87, 1203.15]}} \\
Large & MAD Full Vertex & 10/10 & 10/10 & \makecell[r]{27.6712\\{[23.0463, 44.8374]}} & 10/10 & \makecell[r]{27.6712\\{[23.0463, 44.8374]}} & \makecell[r]{0\\{[0, 0]}} & \makecell[r]{27.6712\\{[23.0463, 44.8375]}} \\
Large & ADR & -- & -- & -- & -- & -- & \makecell[r]{10.0183\\{[5.44291, 12.5177]}} & \makecell[r]{0.0834402\\{[0.0806314, 0.0892716]}} \\
\hline
\end{tabular}}

\begin{minipage}{\textwidth}
\footnotesize Times are reported in seconds, and gaps are reported as percentages. Target-time summaries are conditional on reaching the target, and an unreached target is left blank rather than assigned the 1,200-second limit. Final-gap summaries use all ten runs. Runtime denotes internal algorithm time. ADR is optimized and audited within the affine-policy class; its Final gap entry reports the ex-post gap relative to the strict exact reference rather than an unrestricted-DRO certificate. Dashes denote quantities that are not applicable.
\end{minipage}
\end{table}

\begin{table}[htbp]
\centering
\caption{Mean EPG-O and EPG-V point counts among cases in which EPG-O reaches the $0.01\%$ gap}
\scriptsize
\setlength{\tabcolsep}{4pt}
{
\begin{tabular}{lrrr}
\hline
Family & \makecell{Cases reaching\\$0.01\%$} & \makecell{Mean EPG-O\\points} & \makecell{Mean EPG-V\\points}\\
\hline
Small  & 10 & 5.3 & 12.5\\
Medium & 8  & 6.4 & 28.1\\
Large  & 6  & 8.8 & 107.0\\
Daskin & 9  & 10.0 & 95.0\\
All    & 33 & 7.5 & 56.0\\
\hline
\end{tabular}}
\begin{minipage}{\textwidth}
\footnotesize Cases are selected solely by whether EPG-O reaches the $0.01\%$ target. EPG-O counts are measured after the compression in Algorithm~\ref{alg:continuous_iterative}, and EPG-V counts are measured after the pruning in Algorithm~\ref{alg:continuous_policy_completion}. The reported sets are algorithm outputs and are not claimed to have globally minimum cardinality.
\end{minipage}
\end{table}

For the 33 cases in which EPG-O reaches the $0.01\%$ gap, the corresponding EPG-V runs at the reference first-stage decisions were certified and passed independent validation. The median value-completion times for Small, Medium, Large, and Daskin were 0.2, 3.8, 57.6, and 39.1 seconds, respectively. The family medians indicate that the LP-based completion phase was computationally practical on the reported instances. One Large case required 10 objective-support points and 223 value-completion points. For this case, the EPG-V completion phase uses 11.3 minutes.

\paragraph{Controlled Scalability of EPG}
\label{app:synthetic_scalability}

The controlled dual family is $\mathcal{Z}=\Delta_q^5$, $q\in\{2,3,4\}$, with 32, 243, and 1,024 extreme points, respectively. We use $N_\xi\in\{9,10,11,12\}$, five paired seeds, box support, and the $3^{N_\xi}$ vertices required by MAD Full Vertex. Unlike the main experiment's 0.01-percent target and 1,200-second limit, this experiment uses a validated 2-percent objective-gap target, a common 180-second wall-clock budget, a 4-GiB memory budget, and one solver thread. Budgeted EPG applies inexpensive approximate discovery before global bilinear validation.

\begin{table}[htbp]
\centering
\caption{Controlled scalability of EPG and MAD Full Vertex}
\label{tab:synthetic_scalability}
\scriptsize
\setlength{\tabcolsep}{2pt}
\renewcommand{\arraystretch}{1.05}
{
\begin{tabular}{rrcrrcr}
\hline
& & \multicolumn{3}{c}{EPG} & \multicolumn{2}{c}{MAD Full Vertex}\\
\cline{3-5}\cline{6-7}
\makecell{Dual\\points} & $N_\xi$ & \makecell{Validated $2\%$\\successes /5} & \makecell{Capped\\time (s)} & \makecell{Retained dual\\points} & \makecell{Validated $2\%$\\successes /5} & \makecell{Capped\\time (s)}\\
\hline
32    & 9  & 5/5 & 0.9   & 7  & 5/5 & 21.6\\
32    & 10 & 5/5 & 1.3   & 8  & 5/5 & 65.7\\
32    & 11 & 5/5 & 1.6   & 7  & 0/5 & 180.0\\
32    & 12 & 5/5 & 4.7   & 11 & 0/5 & 180.0\\
243   & 9  & 5/5 & 5.2   & 12 & 5/5 & 21.4\\
243   & 10 & 5/5 & 29.1  & 14 & 5/5 & 63.9\\
243   & 11 & 5/5 & 26.2  & 15 & 0/5 & 180.0\\
243   & 12 & 3/5 & 129.7 & 18 & 0/5 & 180.0\\
1,024 & 9  & 5/5 & 6.3   & 14 & 5/5 & 21.9\\
1,024 & 10 & 5/5 & 15.9  & 15 & 5/5 & 68.0\\
1,024 & 11 & 3/5 & 123.1 & 21 & 0/5 & 180.0\\
1,024 & 12 & 0/5 & 180.0 & 20 & 0/5 & 180.0\\
\hline
\end{tabular}}
\end{table}

The capped median time assigns 180 seconds to an unsuccessful run, and the reported time and retained-point entries are medians over the five paired runs. Both methods achieve 5/5 successes at $N_\xi=9,10$, whereas MAD Full Vertex achieves 0/5 at $N_\xi=11,12$. EPG remains effective longer, although its success rate weakens with 1,024 dual points. Across cells, EPG's median peak memory remains approximately 161--168 MiB. In contrast, MAD Full Vertex exhibits a sharp increase in memory use around $N_\xi=11$. Seven of the 15 runs hit the 4-GiB limit, seven more time out during optimization after reaching approximately 3.4--3.9 GiB, and the remaining run times out during model construction. At $N_\xi=12$, all 15 MAD Full Vertex runs exhaust the 180-second budget during model construction before optimization, so their lower sampled peak memory does not indicate improved scaling.

Approximate separation generated 488 of the 777 vertices added after initialization, or 62.8 percent. After approximate separation had stalled, 349 iterations reached bilinear validation, which found a residual violated vertex in 289 cases, or 82.8 percent. This rate is conditional on those stalled iterations and is partly induced by the stopping logic. The absolute number of generated vertices generally increases with $N_\xi$ and recourse complexity. Approximation is therefore an inexpensive discovery mechanism, whereas bilinear validation remains necessary for certification and becomes the runtime bottleneck. 

\end{APPENDIX}

\end{document}